\let\BFseries\bfseries\def\bfseries{\BFseries\mathversion{bold}} 
\newtheorem{thm}{Theorem}
\newtheorem{lemma}[thm]{Lemma}
\newtheorem{rem}[thm]{Remark}
\theoremstyle{definition}
\newcommand{\pr}[2]{\mathbb{P}_{#1}\left(#2\right)}
\newcommand{\PRO}{\mathbb{P}}
\newcommand{\ER}{\mathbb{E}}
\newcommand{\eps}{\varepsilon}
\newcommand{\FF}{{\mathcal F}}
\begin{document}

\title{First passage times of L\'evy processes over \\ \vspace*{0.4 cm}  a one-sided moving boundary \\}

\author{\renewcommand{\thefootnote}{\arabic{footnote}} {\sc Frank Aurzada}\footnotemark[1], \renewcommand{\thefootnote}{\arabic{footnote}}{\sc Tanja Kramm}\footnotemark[1],  \hspace{2pt} and 
\renewcommand{\thefootnote}{\arabic{footnote}}{\sc Mladen Savov}\footnotemark[2]}
\bigskip \bigskip \bigskip \bigskip \bigskip \bigskip\bigskip \bigskip\bigskip \bigskip \bigskip \bigskip \bigskip \bigskip \bigskip \bigskip\bigskip \bigskip
\bigskip
\bigskip
\bigskip
\bigskip
\bigskip
\bigskip
\bigskip
\date{\today}

\footnotetext[1]{
Technische Universit\"at Braunschweig, Institut f\"ur Mathematische Stochastik, Pockelsstra\ss e 14, 38106 Braunschweig, Germany
{\sl f.aurzada@tu-braunschweig.de},
{\sl tanjakramm@gmail.com}
}
\footnotetext[2]{
University of Reading, Department of Mathematics and Statistics, Whiteknights, PO Box 220, Reading RG6 6AX, UK
{\sl m.savov@reading.ac.uk}
}

\date{\today}

\maketitle
\begin{abstract}
We study the asymptotic behaviour of the tail of the distribution of the first passage time of a L\'evy process over a one-sided moving boundary. 
Our main result states that if the boundary behaves as $t^{\gamma}$ for large $t$ for some $\gamma<1/2$ then the probability that the process stays below the boundary behaves asymptotically as in the case of a constant boundary. We do not have to assume Spitzer's condition in contrast to all previously known results. Both positive ($+t^\gamma$) and negative ($-t^\gamma$) boundaries are considered. 

 
These results extend the findings of \cite{GreNov} and are motivated by results in the case of Brownian motion, for which the above result was proved in \cite{Uch}.

\end{abstract}
%

\vfill

\noindent
\textbf{Key words and phrases:}\
L\'evy processes; moving boundary; one-sided exit problem; one-sided boundary problem; first passage time; survival exponent; boundary crossing probabilities; boundary crossing problem; one-sided small deviations; lower tail probabilities; persistence
\noindent \\
\textbf{ 2010 AMS Mathematics Subject Classification:}
 60G51 
\newpage

\section{Introduction}
\subsection{Statement of the problem and summary of results}
We consider the one-sided exit problem with a moving boundary. In the literature, this problem is known by a variety of names, e.g.\ \textit{one-sided barrier problem}, \textit{boundary crossing problem}, \textit{persistence probabilities}, and \textit{first passage time problem}. For a stochastic process $(X(t))_{t\geq0}$ and a function $f:\mathbb{R}_+ \rightarrow \mathbb{R}$, the so-called moving boundary, the question is to determine the asymptotic rate of the probability
\begin{align}\label{problem}
\PRO \left(  X(t) \leq f(t) , \text{ } 0 \leq t \leq T  \right), \qquad \text{as } T \rightarrow \infty.
\end{align}
If this probability is asymptotically polynomial of order $-\delta$ (e.g.\ if it is regularly varying with index $-\delta$), the number $\delta$ is called the \textit{survival exponent} or \textit{persistence exponent}. If the function $f$ is constant then we are in the classical framework of first passage times over a constant boundary.

This problem is a classical question, which is relevant in a number of different applications, a recent overview is presented in \cite{AurSim} and \cite{lishao}. Let us first review some results involving Brownian motion and L\'evy processes and then summarise the contribution of this paper.
\medskip


In the case that $X$ is a Brownian motion, $\sup_{0\leq t\leq T} B_t$ has the same law as $|B_T|$, by the reflexion principle. From this, everything concerning any \textit{constant} boundary is deduced easily and, in the above terminology, the survival exponent equals $1/2$. However, even for Brownian motion, the question involving \textit{moving} boundaries (\ref{problem}) is already non-trivial. It is studied by \cite{Uch,Gae,jenler,Sal,Nov,Novneu,AurKra} in different ways. Independently of each other \cite{Gae} and \cite{Uch} state an integral test for the boundary $f$, for which the survival exponent remains $1/2$. More precisely, they prove under some additional regularity assumptions that
\begin{align}\label{eqn: necsufBM}
 \int_1^{\infty} |f(t)| t^{-3/2}dt < \infty \Longleftrightarrow  \PRO (X(t) \leq f(t), \text{ } 0\leq t \leq T) \approx  T^{-1/2}, \text{ as } T \rightarrow \infty.
\end{align}
Here and below we use the following notation for strong and weak asymptotics. We write $f \lesssim g$ if $\limsup_{x \to \infty} f(x)/g(x) < \infty$ and $f\approx g$ if  $f \lesssim g$ and $g \lesssim f$. Furthermore, $f \sim g$ if $f(x)/g(x) \rightarrow 1$ as $x \rightarrow \infty$.
\medskip

 For L\'evy processes, the study of the first passage time distribution over a \textit{constant} boundary is a classical area of reasearch. The results follow from fluctuation theory; e.g.\ \cite{Rog} shows that the survival exponent is equal to $\rho \in (0,1)$ if $X$ satisfies Spitzer's condition with $\rho \in (0,1)$, that is, $\PRO (X(t) >0) \rightarrow \rho$, as $t \rightarrow \infty$ (cf.\ \cite{BerDon}). Generally, the assumption of Spitzer's condition appears in the majority of works on this subject; we stress that the technique in this paper is independent of Spitzer's condition. Similar arguments as for L\'evy processes were already used for random walks with zero mean (see e.g.\  \cite{feller}). If the process does not necessarily satisfy Spitzer's condition, various results were obtained for a constant boundary by \cite{Bal,BerDon2,Bor1,Bor2,DenShn,Don2,SupLP}.  

In this paper, we consider L\'evy processes $(X(t))_{t\geq0}$ with triplet $(\sigma^2, b, \nu)$ and consider \textit{moving} boundaries. We focus on the following question: For which functions $f$ does the asymptotic behaviour of the non-exit probability for a constant boundary, i.e.
\begin{align}\label{assum}
\PRO \left(  X(t) \leq 1 , \text{  } 0 \leq t \leq T  \right) = T^{-\delta + o(1)}, \quad \text{as } T \rightarrow \infty,
\end{align}
imply the same asymptotic behaviour for $(\ref{problem})$?

\bigskip
Let us now summarise our results and compare to previously known ones. For this purpose,  let us look for a moment at functions $f(t)= 1 \pm t^{\gamma}$, $\gamma\geq 0$, for simplicity.

\smallskip \textbf{Negative boundary $1-t^\gamma$:} Our first main result, Theorem \ref{falling}, says that if $\nu(\mathbb{R}_-) >0$ and (\ref{assum}) hold then
 \begin{align*}
  \gamma < \frac{1}{2} \quad \Rightarrow \quad \PRO (X(t) \leq 1- t^{\gamma}, \text{ } 0 \leq  t \leq T ) = T^{-\delta +o(1)}, \quad \text{as } T \rightarrow \infty.
 \end{align*}
Note that we do not require any conditions on the left or right tail of the L\'evy measure, neither Spitzer's condition. Negative results (i.e.\ situations where the survival exponent does change) are given in \cite{mogpec,GreNov}. Results similar to those for Brownian motion are only available under such heavy assumptions as bounded jumps from above or $X$ satisfying Cram\'{e}r's condition, see \cite{Nov2} or \cite{Novdis}.

\smallskip
\textbf{Positive boundary $1+t^\gamma$:} Our second main result, Theorem \ref{mblp}, says that assuming that $\nu(\mathbb{R}_+) >0$, $\nu(\mathbb{R}_-) >0$,  and (\ref{assum}) hold we have
\begin{align*}
 \gamma < \frac{1}{2} \quad \Rightarrow \quad \PRO (X(t) \leq 1+ t^{\gamma}, \text{ } 0 \leq t \leq T ) = T^{-\delta+o(1)}, \quad \text{as } T \rightarrow \infty.
\end{align*}
Again, no conditions for the left or right tail of the L\'evy measure are needed. On the other hand, assuming that Spitzer's condition holds with $\rho \in (0,1)$, the result of \cite{GreNov}  states that
\begin{align*}
 \gamma < \rho \quad \Rightarrow \quad \PRO (X(t) \leq 1+ t^{\gamma}, \text{ } 0 \leq t \leq T ) \sim  T^{-\rho} \ell (T)  \quad \text{as } T \rightarrow \infty,
\end{align*}
where $\ell$ is a slowly varying function. Hence, we improve the result of \cite{GreNov} when $\rho <\tfrac{1}{2}$ or when $X$ does not satisfy Spitzer's condition. Note that \cite{GreNov} determines the exact asymptotics; consequently, \cite{GreNov} gives a more precise result for $\gamma < \rho$.

\medskip
The main contributions of this paper can be summarised as follows:
\begin{itemize}
 \item We show a way to transfer results for a constant boundary (\ref{assum}) to moving boundaries. In this connection, Spitzer's condition is not required at any point in our arguments.
 \item In the simplified case, $f(t)= 1 \pm t^{\gamma}$, we obtain the same result as for Brownian motion (see \cite{Uch}). Intuitively, this follows from the fact that a L\'evy process allows more (large) fluctuations than Brownian motion and can thus follow a boundary at least as well as Brownian motion.
 \item This paper is meant to be a first attempt to find necessary and sufficient conditions for the boundary $f$ (in the simplified case, that is, find optimal $\gamma$) such that the non-exit probabilities for constant and moving boundaries have the same asymptotic behaviour.
\end{itemize}

On the downside, we can only control the polynomial order term of the probability. Contrary, for constant boundaries more precise results can be obtained -- often, the probability in question is shown to be regularly varying. We stress that the techniques used for that type of results do not seem applicable to moving boundaries. The reason is that, unlike in the constant boundary case and for a small class of very specific decreasing moving boundaries (cf.\ \cite{mogpec}), no factorization identities are known yet for moving boundaries. Our results are a first attempt to approach the problem and to find different effects that allow different boundaries.
\medskip

Let us mention that related topics have been discussed like the moments (\cite{DonMal,Gut,Rot}), the finiteness (\cite{DonMal2}), and the stability (\cite{GrifMal}) of the first passage time. Furthermore,  L\'evy processes and stochastic boundaries (\cite{VO}) 
are discussed in the literature. 
\medskip

We proceed this paper by formally introducing our main results in Section \ref{mainresult}. There, we also present the main idea of the proofs.  The proof of Theorem \ref{falling}, the case of negative  boundaries, is given in Section \ref{proofthm2}, whereas Section \ref{proof1} contains the proof for positive boundaries, Theorem \ref{mblp}. For reasons of clarity and readability some auxiliary lemmas are combined in Section \ref{helpresult} and may be of independent interest.

\subsection{Main results}\label{mainresult}
We study the one-sided exit problem with moving boundaries for a L\'evy process denoted by $(X(t))_{t\geq 0}$. L\'evy processes possess stationary and independent increments and almost surely right continuous paths (see \cite{bertoin}, \cite{sato}). By the L\'evy-Khintchine formula, the characteristic function of a marginal of a L\'evy process $(X(t))_{t\geq 0}$ is given by
\begin{align*}
 \ER \left( e^{iuX(t)}  \right) = e^{t \Psi(u)}, \quad \text{for every } u \in \mathbb{R},
\end{align*}
where
\begin{align}\label{char}
 \Psi (u) =   i b u - \frac{\sigma^2}{2} u^2+ \int_{\mathbb{R} } (e^{iux}-1 - \mathbf{1}_{\{ |x| \leq 1  \}} iux) \nu (dx),
\end{align}
for parameters  $\sigma^2 \geq 0 $,  $b \in \mathbb{R}$, and a positive measure $\nu $ concentrated on $\mathbb{R}\backslash \{0\}$, called L\'evy measure, satisfying 
\begin{align*}
\int_{\mathbb{R}} (1 \wedge x^2) \nu (dx) < \infty.
\end{align*}
For a given triplet $(\sigma^2, b, \nu)$ there exists a L\'evy process $(X(t))_{t\geq 0}$ such that (\ref{char}) holds, and its distribution is uniquely determined by its triplet. We call $(X(t))_{t\geq 0}$  a $(\sigma^2, \nu)$-L\'evy martingale if (\ref{char}) is equal to 
\begin{align}\label{marchar}
 \Psi (u) = -\frac{\sigma^2}{2} u^2+ \int_{\mathbb{R}} (e^{iux}-1 - iux) \nu (dx)
\end{align}
for a measure $\nu$ satisfying $\int (|x| \wedge x^2) \nu(dx)<\infty$. It is a martingale in the usual sense.

\medskip

We can now formulate our first main result, which corresponds to  the one-sided exit problem with a {\it negative boundary}.
\begin{thm}\label{falling}
 Let $X$ be a L\'evy process with triplet $(\sigma^2,b,\nu)$ where $\nu (\mathbb{R}_-) >0$. Let $f \colon \mathbb{R}_+ \rightarrow \mathbb{R}_+$  be a differentiable, non-decreasing function such that $f(0) <1 $,
 $\int_1^\infty f'(s)^2 ds < \infty$, and $f'(t) \searrow 0$, for $t\rightarrow \infty$. Let $\delta > 0$. If 
\begin{align}\label{fallingass}
 \PRO (X(t) \leq 1, \text{ } 0\leq t \leq T) = T^{-\delta+ o(1)},\quad \text{as } T \rightarrow \infty
\end{align}
 holds then 
\begin{align}\label{resultthm1}
 \PRO (X(t) \leq 1- f(t), \text{ } 0\leq t \leq T) = T^{-\delta+ o(1)}, \quad \text{as } T \rightarrow \infty.
\end{align}
\end{thm}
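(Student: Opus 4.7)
Since $f\ge 0$, we have $\{X(t)\le 1-f(t),\;0\le t\le T\}\subseteq\{X(t)\le 1,\;0\le t\le T\}$, so \eqref{fallingass} immediately gives the upper bound in \eqref{resultthm1} and all the content lies in the matching lower bound. The plan is to split $[0,T]$ at a suitable time $s=s(T)$ and apply the strong Markov property.

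Concretely, it suffices to construct on $[0,s]$ an event of probability at least $T^{-o(1)}$ on which both $X(t)\le 1-f(t)$ for $0\le t\le s$ and $X(s)\le -f(T)-1$. Because $f$ is non-decreasing, the latter forces $1-f(t)-X(s)\ge 2$ on $[s,T]$, so the moving-boundary event on $[s,T]$ is implied by $\{X(t)-X(s)\le 2\}$; by the strong Markov property and the standard fact that raising the constant barrier from $1$ to $2$ preserves the polynomial order in \eqref{fallingass}, this contributes a factor $(T-s)^{-\delta+o(1)}\sim T^{-\delta+o(1)}$ and yields the desired lower bound. To build the initial-segment event I would exploit $\nu(\mathbb{R}_-)>0$: negative jumps of a fixed size $-a$ with $\lambda:=\nu((-\infty,-a])>0$ occur at a positive Poisson rate, and one can chain enough of them, or combine them with the natural fluctuations of $X$, to drive $X$ below $-f(T)-1$ while respecting the moving boundary. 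The integrability hypothesis $\int_1^\infty f'(s)^2\,ds<\infty$ together with $f'(t)\to 0$ gives $f(t)=o(\sqrt t)$, so the required drop is sub-critical relative to the natural fluctuation scale of $X$; this is what eventually keeps the cost of the initial segment sub-polynomial. I expect the auxiliary lemmas of Section~\ref{helpresult} to supply the quantitative version of this sub-polynomial estimate, playing the role that the Cameron--Martin bound $\exp(-\tfrac12\int_0^T f'(s)^2\,ds)$ plays in Uchiyama's Brownian argument \cite{Uch}.

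The main obstacle I foresee is striking the right balance for the length of the initial segment: if $s$ is too small one cannot push $X(s)$ down to $-f(T)-1$ with appreciable probability (since the natural scale of $X(s)$ is roughly $\sqrt s$, or smaller for degenerate triplets), whereas if $s$ is large enough to make the drop cheap, the constant-boundary factor $s^{-\delta}$ on $[0,s]$ is a genuine polynomial rather than $T^{-o(1)}$. A single split is therefore unlikely to give the sharp exponent, and I anticipate that one must iterate the drop-and-restart over a slowly growing number of sub-intervals, with the $L^2$ summability of $f'$ providing the ``energy'' estimate that glues the iterates together at sub-polynomial total cost. The absence of a Girsanov transform for a general L\'evy triplet (in particular when $\sigma^2=0$) is precisely what forces one to replace Uchiyama's clean Cameron--Martin argument by such a jump-based construction, and this replacement is, I expect, the technical heart of the proof.
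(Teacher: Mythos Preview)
Your opening is correct: the upper bound is trivial and all the work is in the lower bound, and you are right that the negative jumps and the quantity $\int_1^\infty f'(s)^2\,ds$ are the essential ingredients. However, the mechanism you propose diverges from the paper's in a way that matters, and it rests on a misconception.

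You write that ``the absence of a Girsanov transform for a general L\'evy triplet (in particular when $\sigma^2=0$)'' forces a jump-chaining construction. This is not so: there \emph{is} a Girsanov theorem for L\'evy and additive processes (Theorem~\ref{gir}, taken from Jacod--Shiryaev and Sato), and it is the central tool of the paper's proof. The transform tilts the L\'evy measure rather than the drift: one passes from $\nu(dx)\,ds$ to $(1+f_n'(s)\,|x|\,m^{-1}\mathbf{1}_{\{x\in A\}})\,\nu(dx)\,ds$ on a compact set $A\subseteq[-1,0)$ with $m=\int_A x^2\,\nu(dx)$. Under the new measure the process decomposes as $X+\tilde Z(f_n(\cdot)-f_n(\ln T))$ with $\tilde Z$ a bounded-jump L\'evy martingale, and the moving boundary is absorbed into the drift. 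The Radon--Nikodym cost is controlled, via the reverse H\"older inequality, by $\exp(-c\sqrt{\|f'\|_{L^2}^2\,\ln(1/p)})$, which is precisely the L\'evy analogue of the Cameron--Martin bound you mention.

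The iteration is also of a different nature from your ``drop-and-restart''. One step of the change of measure does \emph{not} reduce to a constant boundary; it reduces the boundary from $1-f_n(t)$ to roughly $1-f_n(t)^{2/3}$ (because $\tilde Z$ is a martingale with exponential moments, so $\tilde Z(u)\le u^{2/3}$ with high probability by Lemma~\ref{coup}). Iterating $n(T)\approx c\ln\ln T$ times collapses $f(t)^{(2/3)^{n(T)}}$ to a bounded quantity, and the accumulated Girsanov cost over these iterations is still $T^{o(1)}$. A separate ``external'' time-splitting over intervals $[\ln^{k}T,\ln^{k-1}T]$ handles residual technicalities, but the core step on each block is the measure change, not a Markov restart.

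Your own scheme runs into the obstacle you already identified and, I believe, cannot escape it by iteration alone. To apply the strong Markov property in the way you describe you need $X(s)\le -f(T)-1$, and since $f(T)$ can be of order $T^{1/2-\varepsilon}$ this drop has to be essentially of size $\sqrt{T}$ regardless of how you slice $[0,T]$; the point is that the target level is dictated by the \emph{global} size of $f$, not by the increment $f(s_{k+1})-f(s_k)$. The paper avoids this by never aiming for $-f(T)$: the change of measure turns the moving boundary locally into a flatter one, and the martingale $\tilde Z$ only has to beat $f^{2/3}$, not $f$ itself. That is the idea you are missing.
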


The following theorem corresponds to the one-sided exit problem with a {\it positive boundary}.
\begin{thm}\label{mblp}
 Let $X$ be a L\'evy process with triplet $(\sigma^2 ,b,\nu)$ where $\nu (\mathbb{R}_+) >0$ and $\nu (\mathbb{R}_-) >0$. Let $f \colon \mathbb{R}_+ \rightarrow \mathbb{R}_+$  be a differentiable, non-decreasing function such that  $\int_1^\infty f'(s)^2 ds < \infty$ and $\sup_{s\geq 1} |f'(s)| < \infty$. Let $ \delta > 0$. If 
\begin{align}\label{mblpass}
 \PRO (X(t) \leq 1, \text{ } 0\leq t \leq T) = T^{-\delta+ o(1)}, \quad \text{as } T \rightarrow \infty
\end{align}
holds then 
\begin{align*}
 \PRO (X(t) \leq 1+ f(t), \text{ } 0\leq t \leq T) = T^{-\delta+ o(1)}, \quad \text{as } T \rightarrow \infty.
\end{align*}
\end{thm}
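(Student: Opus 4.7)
The lower bound $\PRO(X(t) \leq 1 + f(t),\, 0 \leq t \leq T) \geq \PRO(X(t) \leq 1,\, 0 \leq t \leq T) = T^{-\delta + o(1)}$ is immediate from $f \geq 0$, so the content is in the matching upper bound. The plan is to construct a probability measure $\tilde\PRO$ on $\FF_T$ under which the process $X(t) + f(t)$ has the same law as $X(t)$ has under $\PRO$. Writing $Z_T := d\tilde\PRO/d\PRO\big|_{\FF_T}$ and $A := \{X(t) \leq 1,\, 0 \leq t \leq T\}$, this identification gives
\[
\PRO(X(t) \leq 1 + f(t),\, 0 \leq t \leq T) \;=\; \tilde\PRO(A) \;=\; E\bigl[Z_T\, \mathbf{1}_A\bigr],
\]
and H\"older's inequality with conjugate exponents $p_T, q_T > 1$ then yields
\[
\PRO(X(t) \leq 1 + f(t),\, 0 \leq t \leq T) \;\leq\; \bigl(E[Z_T^{p_T}]\bigr)^{1/p_T}\, \PRO(A)^{1/q_T}.
\]
The strategy is to pick $p_T \to \infty$ slowly enough that the first factor is $T^{o(1)}$ while the second is $T^{-\delta + o(1)}$.

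When $\sigma > 0$, I would realise the shift via the classical Girsanov density on the Brownian part $\sigma W$ of $X$,
\[
Z_T \;=\; \exp\Bigl(-\sigma^{-1}\!\int_0^T f'(s)\, dW(s) - (2\sigma^2)^{-1}\!\int_0^T f'(s)^2\, ds\Bigr),
\]
for which $E[Z_T^p] = \exp\bigl(p(p-1)(2\sigma^2)^{-1}\int_0^T (f')^2\, ds\bigr)$. After mildly smoothing $f$ on $[0,1]$ (e.g.\ replacing $f(t)$ by $f(1)$ there, which only enlarges the event), the hypotheses $\int_1^\infty (f')^2\, ds < \infty$ and $\sup_{s \geq 1}|f'| < \infty$ give $\int_0^T (f')^2\, ds \leq C$ uniformly in $T$, so $E[Z_T^p] \leq e^{p^2 C/(2\sigma^2)}$. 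Choosing $p_T = \sqrt{\log T}$ yields $(E[Z_T^{p_T}])^{1/p_T} = T^{o(1)}$ and $1/q_T = 1 - 1/p_T$, whence $\PRO(A)^{1/q_T} = T^{-\delta(1 - 1/p_T) + o(1)} = T^{-\delta + o(1)}$; the product delivers the claimed upper bound.

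The principal obstacle is the pure-jump case $\sigma = 0$, where no Brownian Girsanov is available. Here one replaces $Z_T$ by an Esscher-type Dol\'eans--Dade exponential $Z_T = \mathcal{E}\!\bigl(\int_0^\cdot\!\int H\,\tilde N\bigr)_T$ of the compensated Poisson random measure $\tilde N$, with a bounded integrand $H(s,x)$ compactly supported in $\mathbb{R}\setminus\{0\}$ chosen so that $\int_\mathbb{R} x(e^{H(s,x)}-1)\,\nu(dx) = -f'(s)$. The hypotheses $\nu(\mathbb{R}_+) > 0$ and $\nu(\mathbb{R}_-) > 0$ supply jump mass in both directions, so such an $H$ can be constructed with $\|H\|_\infty$ controlled by $\sup |f'|$. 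Using $(1+H)^p - 1 - pH \leq p(p-1)H^2$ in the small-$H$ regime one then derives $E[Z_T^p] \leq \exp\bigl(C' p^2 \int_0^T (f')^2\, ds\bigr)$, exactly mirroring the Gaussian bound, and the same slow-$p_T$ argument closes the proof. The delicate point is this construction of $H$: it must induce exactly the required drift shift while remaining small enough for the quadratic moment estimate to hold, and it is precisely here that the two-sided sign conditions on $\nu$ and the $L^\infty$-bound on $f'$ are essential.
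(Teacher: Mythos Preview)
Your Gaussian case $\sigma>0$ is fine and is in fact cleaner than the paper's argument: when a Brownian component is present, classical Girsanov on that component produces an honest drift shift of the \emph{whole} process, so $X+f$ under $\tilde\PRO$ really does have the $\PRO$-law of $X$, and the H\"older-with-slow-$p_T$ argument finishes. The paper does not exploit this shortcut; it treats the Gaussian and pure-jump cases uniformly via the jump machinery below.

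The pure-jump case, however, contains a genuine gap. An Esscher-type change of measure on the Poisson random measure with integrand $H(s,x)$ does \emph{not} produce a pure drift shift: under $\tilde\PRO$ the canonical process has triplet $\bigl(\sigma^2,\, bt-f(t),\, e^{H(s,x)}\nu(dx)\,ds\bigr)$, so the L\'evy measure is modified as well. Consequently the identification $\PRO(X(t)\le 1+f(t),\,t\le T)=\tilde\PRO(X(t)\le 1,\,t\le T)$ is false; what you actually obtain after H\"older is a bound in terms of $\PRO\bigl(X(t)+Z(t)\le 1+\text{const},\,t\le T\bigr)$, where $Z$ is the extra compensated jump martingale with intensity $(e^{H}-1)\nu\,ds$. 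Controlling this auxiliary martingale is exactly the hard part. The paper homogenises $Z$ to a time-changed L\'evy martingale $\tilde Z(f_n(\cdot)-f_n(1))$, couples $\tilde Z$ to Brownian motion via KMT (Lemma~\ref{coup2}), bounds the Brownian fluctuation by $\max\{(\ln T)^5,t^{3/4}\}$ (Lemma~\ref{BBgr}), and thereby reduces the boundary from $f_n$ to $f_{n+1}\sim f_n^{3/4}$; iterating $O(\ln\ln T)$ times drives the boundary down to $(\ln T)^6$, and a final appeal to Theorem~\ref{falling} (this is where $\nu(\mathbb{R}_-)>0$ enters) closes the loop. Your use of both sign hypotheses ``to construct $H$'' is also misplaced: a drift shift of fixed sign needs jump mass in only one direction; the paper uses $\nu(\mathbb{R}_+)>0$ for the measure change and $\nu(\mathbb{R}_-)>0$ only for the endgame via Theorem~\ref{falling}.
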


The proofs of these theorems are given in Section~\ref{proofthm2} and~\ref{proof1}, respectively, and the ideas will be sketched below.

Let us give a few comments on these results.
\begin{rem}
  In Theorem \ref{falling}  (Theorem \ref{mblp}, respectively), the assumption that there are negative (positive, respectively) jumps is an essential part of our technique. We will ``compensate'' the (negative/positive) boundary by (negative/positive) jumps and thus reduce the problem to the constant boundary case.
\end{rem}

\begin{rem}
In both Theorems, the regularity conditions on the function $f$ are for technical purposes only. Trivially, both Theorems are also valid for a less regular function $g$ if there is a function $f$ satisfying the conditions in Theorem \ref{falling} (Theorem \ref{mblp}, respectively) such that $g(s) \leq f(s) $, for all $s \geq 0$. The important property of the function $f$ is its asymptotic behaviour at infinity,
$$
\int_1^\infty f'(t)^2 d t < \infty,
$$
which is a slightly weaker assumption than Uchiyama's integral test (\ref{eqn: necsufBM}).
\end{rem}

\begin{rem}\label{rem:negjumps}
 The assumption of negative jumps in Theorem \ref{mblp} seems to be of technical matter. Different assumptions exist in order to replace the assumption of negative jumps such as the assumption that
\begin{enumerate}[(a)]
 \item the renewal function $U$ of the ladder height process satisfies  $U((\ln T)^5) \leq T^{o(1)}$, or
 \item there is a $T_0 \in (1,T^{o(1)})$ depending on $T$  such that $\PRO (X(T_0) \leq - (\ln T)^{5}) \geq T^{o(1)}$.
 \end{enumerate}
 See Remark \ref{rem:detailsnegjumps} below for a detailed discussion.
\end{rem}

\begin{rem}\label{rem: Spitzer}
  The assumption of equation (\ref{fallingass})/(\ref{mblpass}) is associated with Spitzer's condition. 
Recall that (cf.  \cite{Rog} or  \cite{bertoin}, Theorem 18) Spitzer's condition holds with $ \rho \in (0,1)$ if and only if the probability in (\ref{fallingass})/(\ref{mblpass}) is regularly varying with index $-\rho$. Note that the class of L\'evy processes satisfying assumption (\ref{fallingass})/(\ref{mblpass}) is strictly larger than the class of L\'evy processes satisfying  Spitzer's condition (see \cite{DenShn}, or  \cite{BerDon2,Don2} for a discrete-time version). For instance, L\'evy processes where $\ER X (1) \in (0,\infty)$ and the left tail of the L\'evy measure is  regularly varying with index $-c$, $c >1$, satisfy assumption (\ref{fallingass})/(\ref{mblpass}) with $\delta = c$, but not Spitzer's condition with $\rho \in (0,1)$.
\end{rem}

Let us come back to the question posed in (\ref{assum}), whether necessary and sufficient conditions on the boundary exist for which the survival exponent stays the same compared to the case of a  constant boundary. More precisely, let $\delta > 0$,  $\alpha_+ := \sup\{r\geq 0: \ER \left( (X(1)^+)^{r} \right) < \infty\}$ and $\alpha_- := \sup\{r\geq 0: \ER \left( (X(1)^-)^{r} \right) < \infty\}$. Because of the present results and previously known ones (e.g.\ \cite{DonMal2}, \cite{GreNov}, and \cite{mogpec}) it seems to be reasonable to expect that (\ref{fallingass}) implies
\begin{align*}
 \gamma < \max \left\{  \tfrac{1}{2}, \tfrac{1}{\alpha_-}   \right\} \Longleftrightarrow \PRO (X(t) \leq 1 - t^{\gamma}, \text{ } 0\leq t \leq T) = T^{-\delta+ o(1)}.
\end{align*} 
We have shown sufficiency of $ \gamma <  \tfrac{1}{2}$.

In the same way, one might also expect that (\ref{mblpass}) implies
\begin{align*}
 \gamma < \max \left\{  \tfrac{1}{2}, \tfrac{1}{\alpha_+}, \tfrac{1}{\alpha_-} \right\} \Longleftrightarrow \PRO (X(t) \leq 1 + t^{\gamma}, \text{ } 0\leq t \leq T) = T^{-\delta+ o(1)}.
\end{align*} 
Combining our results with \cite{GreNov} (who assume Spitzer's condition with $\rho\in(0,1)$) shows sufficiency of $ \gamma < \max \left\{  \tfrac{1}{2}, \rho \right\}$. Recall that for any L\'evy process belonging to the domain of attraction of a strictly stable process with index $\alpha \in (0,2)$ we have $\rho \leq \max \{\tfrac{1}{\alpha_+}, \tfrac{1}{\alpha_-}\}=\tfrac{1}{\alpha}$ (cf.\ \cite{Zolo}).
\medskip

We conclude this section by presenting a sketch of the proof of Theorem \ref{falling}.  For this purpose, we need the definition of an additive process. This class of processes consists of time-inhomogeneous processes which have independent increments and start at $0$ (see \cite{sato}). The triplet is given by $(\sigma^2, f_X (t), \Lambda_X (dx,dt))$, $f_X \in C[0,\infty)$ where $f(0)=0$, $\sigma \geq 0$, and $\Lambda_X$ is a measure on $\mathbb{R}\times [0,T]$.  

\textbf{Sketch of the proof of Theorem \ref{falling}:} Note that the upper bound is trivial since $f$ is positive.
For the lower bound our main idea is to find an iteration method to reduce the exponent of the boundary in each step such that eventually the boundary turns into a constant boundary. In each iteration step, we start with a change of measure compensating the boundary $f$ by negative jumps. Then, we get an additive process which has the following triplet $\left(\sigma^2, b\cdot s, (1+f'(s)|x|/m \mathbf{1}_{\{x\in A\}})ds\nu(dx) \right)$, where $A\subseteq [-1,0)$ and $m$ are suitably chosen. This process can be represented as $X(\cdot) +Z(\cdot)$, where $X$ is the original L\'evy process and $Z$ has the triplet $(0,0, f'(s)|x|/m \mathbf{1}_{\{x\in A\}}ds\nu(dx))$. This approach implies the estimate
\begin{align*}
 \PRO (X(t) \leq 1 - f(t) , \text{ } t\leq T)\geq  \PRO (X(t) + Z(t) \leq 1, \text{ } t\leq T) \cdot  e^{-c \sqrt{\ln T}}.
\end{align*}
The term $\exp\left(- c\, \sqrt{\ln T}  \right)$ represents the cost of changing the measure. A homogenization yields a L\'evy process $\tilde{Z}$ with $Z(\cdot)\overset{d}{=} \tilde{Z}(f(\cdot))$ and triplet $(0,0,|x|/m \mathbf{1}_{\{x\in A\}}\nu(dx))$. Since $\tilde{Z}$ is a L\'evy martingale with some finite exponential moment, 
we can finally estimate $\PRO (X(t) + \tilde{Z} (f(t) ) \leq 1, \text{ } t\leq T)$  by $\PRO (X(t) \leq 3 - f(t)^{2/3}, \text{ } t\leq T)$ giving essentially
\begin{align*}
 \PRO (X(t) \leq 1 - f(t) , \text{ } t\leq T)\geq \PRO (X(t) \leq 3 - f(t)^{2/3}, \text{ } t\leq T)\, e^{-c \sqrt{\ln T}}.
\end{align*}
 This procedure is repeated until $f(t)^{(2/3)^{n}}\leq 2$. Then, the asymptotic behaviour of $\PRO (X(t) \leq 3 - f(t)^{(2/3)^n}, \text{ } t\leq T) $ follows from (\ref{assum}). Hence, through an $n$-times iteration of these steps the  survival exponent in (\ref{problem}) is obtained with the help of (\ref{assum}) since $n$ is of order $\ln \ln T$. A similar approach is used in the proof of Theorem \ref{mblp}. Here, the upper bound is proved through an iteration method. 


\section{Auxiliary results}\label{helpresult}
\subsection{Technical tools regarding the boundary and Girsanov transform for additive processes}
%
%
%
%

The following properties which are easy to check will be required for the proofs.
\begin{lemma}
Let $f : \mathbb{R}_+ \rightarrow \mathbb{R}$ be a non-decreasing function satisfying the assumptions of Theorem \ref{mblp}. Then,
\begin{align}\label{prop1}
 f(T) \leq c \cdot T, \text{ for all } T \text{ sufficiently large},
\end{align}
for some constant $c>0$. 
Furthermore, if the function $f$ satisfies additionally the assumptions of Theorem \ref{falling}, then there exists a constant $\tilde{c} >0$ such that
\begin{align}\label{prop2}
\sqrt{t} f'(s)  \leq \tilde{c} \quad \text{ a.e. for  all } s \geq t \geq1.
\end{align}
\end{lemma}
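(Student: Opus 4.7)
Both claims follow from elementary calculus arguments based on the qualitative properties of $f'$, so the proof is short. I would treat the two inequalities separately.

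For inequality \eqref{prop1}, I would use only that $f$ is non-decreasing and that $M:=\sup_{s\geq 1}|f'(s)|<\infty$. Writing
\[
f(T)=f(1)+\int_1^T f'(s)\,ds\leq f(1)+M(T-1),
\]
the right-hand side is bounded by a constant times $T$ for all sufficiently large $T$, giving \eqref{prop1}. No additional ingredient is needed.

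For inequality \eqref{prop2}, I would exploit that under the assumptions of Theorem~\ref{falling} the derivative $f'$ is non-increasing (since $f'(t)\searrow 0$). Thus for $s\geq t\geq 1$ one has $f'(s)\leq f'(t)$, and it suffices to bound $\sqrt{t}\,f'(t)$ uniformly in $t\geq 1$. By monotonicity, on the interval $[t/2,t]$ with $t\geq 2$ we have $f'(u)\geq f'(t)$, hence
\[
\int_{t/2}^{t}f'(u)^{2}\,du\;\geq\;\tfrac{t}{2}\,f'(t)^{2},
\]
and the left-hand side tends to $0$ as $t\to\infty$ because $\int_1^\infty f'(u)^2\,du<\infty$. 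Consequently $\sqrt{t}\,f'(t)\to 0$, in particular it is bounded for $t$ large, while for $t$ in the compact range $[1,T_0]$ the quantity $\sqrt{t}\,f'(t)\leq \sqrt{T_0}\,f'(1)$ is trivially bounded. Combining the two ranges yields a uniform constant $\tilde c$.

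The only mild subtlety is the second claim: one has to notice that the monotonicity of $f'$ is what converts the $L^2$-integrability at infinity into a pointwise $O(1/\sqrt{t})$ bound on $f'(t)$, via a dyadic-type comparison with the tail of $\int f'^{2}$. Once that comparison is written down, neither step is an obstacle.
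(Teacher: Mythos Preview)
Your argument is correct and entirely in the spirit of what the paper intends: the authors do not actually prove this lemma but simply remark that ``the following properties\ldots are easy to check.'' Your verification via the bounded derivative for \eqref{prop1} and the dyadic comparison $\int_{t/2}^{t} f'(u)^2\,du \geq \tfrac{t}{2}f'(t)^2$ together with monotonicity of $f'$ for \eqref{prop2} is precisely the kind of routine check they are leaving to the reader.
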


For the proofs we use the Girsanov transform for additive processes to transform L\'evy processes into additive processes.
Let us recall that $N$ is a Poisson random measure on $(\mathbb{R}, \mathbb{R}^+)$ with intensity $\Lambda(dx,ds)$. The compensated measure is denoted by $\bar{N} (dx,ds) = N(dx,ds) - \Lambda (dx,ds)$. Furthermore, let $\PRO_X$ be a probability measure on $(D, \mathcal{F}_{D})$ where $D$ is the space of mappings from $[0,\infty)$ into $\mathbb{R}$ right continuous with left limits and $\mathcal{F}_{D}$ is the smallest $\sigma$-algebra that makes $X(t)$, $t\geq 0$, measurable (cf. \cite{sato}). 

The following theorem needed in the main proofs can be found in \cite{JS} (Theorem 3.24) and \cite{sato} (Theorems 33.1 and 33.2).
\begin{thm}\label{gir}
 Let $X$ and $Y$ be two additive processes with  triplets
$(\sigma_X^2, f_X (t), \Lambda_X (dx,dt))$ and $(\sigma_Y^2 , f_Y (t), \Lambda_Y (dx,dt))$, where  $\Lambda_X , \Lambda_Y$ are measures concentrated on $\mathbb{R}\backslash\{0\} \times [0,T]$. Then $\PRO_X |_{\FF_T}$ and $\PRO_Y |_{\FF_T}$  are absolutely continuous if and only if $\sigma_X = \sigma_Y$ and there exists $\theta(\cdot,\cdot) : \mathbb{R} \times [0,T] \rightarrow \mathbb{R}$ such that
\begin{itemize}
 \item $\int_0^T \int_{\mathbb{R} } \left( e^{\theta (x,s)/2} -1  \right)^2 \Lambda_X (dx,ds) < \infty,$ 
\item $\Lambda_X$ and $\Lambda_Y$ are  absolutely continuous with  $\frac{d \Lambda_Y}{d \Lambda_X} (x,s) = e^{\theta (x,s)}$, and
\item $\ f_Y (t)  =  f_X (t)  + \int_0^t \int_{|x| \leq 1} \left( e^{\theta (x,s)} -1  \right) x  \Lambda_X (dx,ds) $, for all $t \in [0,T]$.
\end{itemize}
The density transformation formula is given by
\begin{align}\label{dens}
 \frac{d \PRO_{Y}|_{\FF_T }}{d \PRO_X |_{\FF_T}} (X(\cdot)) = \exp & \left( - \int_0^T \int_{\mathbb{R} }  \left( e^{\theta(x,s)} -1- \theta(x,s) \right) \Lambda_X (dx,ds) + \right. \notag \\
& \left. \int_0^T \int_{\mathbb{R} }  \theta (x,s) \bar N_X (dx,ds) (\cdot)  \right) \quad \PRO_X \text{-a.s.}
\end{align}

\end{thm}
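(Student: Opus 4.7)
The plan is to treat Theorem \ref{gir} as a consequence of standard semimartingale calculus, recognising the density in (\ref{dens}) as the Dol\'eans--Dade stochastic exponential of a compensated Poisson integral. I would split the argument into sufficiency (construction of $\PRO_Y$ from a given $\theta$) and necessity (recovery of $\theta$ from equivalence of the two laws), and, since only sufficiency is invoked in the main proofs of the paper, spend the bulk of the effort on that direction.

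For sufficiency, given $\theta$ satisfying the three bullet points, the natural candidate density process is
\[
D_t := \exp\!\left( \int_0^t \!\!\int_{\mathbb{R}} \theta(x,s)\,\bar N_X(dx,ds) - \int_0^t \!\!\int_{\mathbb{R}} \bigl(e^{\theta(x,s)} - 1 - \theta(x,s)\bigr)\,\Lambda_X(dx,ds)\right).
\]
The first step is to identify $D$ as the stochastic exponential of the local $\PRO_X$-martingale $M_t := \int_0^t\!\int(e^{\theta(x,s)}-1)\,\bar N_X(dx,ds)$. The integrability assumption $\int_0^T\!\int(e^{\theta/2}-1)^2\,\Lambda_X<\infty$ is tailored to exactly this purpose: since $(e^{\theta/2}-1)^2$ is comparable to $\theta^2$ for small $|\theta|$ and to $e^{\theta}$ for large $\theta$, it simultaneously controls the small-jump $L^2$-norm of $M$ and the large-jump exponential moment needed to upgrade $\mathcal{E}(M)$ from a non-negative local martingale to a uniformly integrable true martingale with $\ER_X[D_T]=1$. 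Thus $\PRO_Y := D_T\cdot\PRO_X$ is a probability measure equivalent to $\PRO_X$ on $\FF_T$.

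The second step is to identify the law of the canonical process under $\PRO_Y$ as that of an additive process with triplet $(\sigma_X^2, f_Y, \Lambda_Y)$. By uniqueness of the triplet, it suffices to compute the characteristic function, i.e.\ to show that for every $u\in\mathbb{R}$ and $t\le T$,
\[
\ER_Y\!\left[e^{i u X(t)}\right] \;=\; \ER_X\!\left[D_t\, e^{i u X(t)}\right]
\]
coincides with the L\'evy--Khintchine exponent associated to $(\sigma_X^2, f_Y, \Lambda_Y)$. A direct computation based on the decomposition of $X$ into its continuous Gaussian part, the drift $f_X$, and an integral against $\bar N_X$ converts the product $D_t\,e^{iu X(t)}$ into an exponential functional whose predictable compensator produces precisely the tilted jump measure $e^{\theta}\Lambda_X = \Lambda_Y$ and the extra drift $\int_0^t\!\int_{|x|\le 1}(e^{\theta}-1)x\,\Lambda_X$ that appears in the third bullet point; that drift is nothing but the compensation correction coming from re-centering the small jumps under the new intensity.

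For necessity, assume $\PRO_X\sim\PRO_Y$ on $\FF_T$ with strictly positive density. The quadratic variation of the continuous martingale part of $X$ is a path-wise functional, hence invariant, giving $\sigma_X=\sigma_Y$. Writing the Kunita--Watanabe decomposition of the density process and inspecting its jumps, one obtains a predictable function $\theta$ such that $\Delta(\log D)_s = \theta(\Delta X_s,s)$ on jump times, which forces $d\Lambda_Y/d\Lambda_X = e^\theta$; matching the predictable drifts under the two measures yields the third bullet point. The main technical obstacle I anticipate is justifying that $D$ is a \emph{true} rather than merely local martingale, and that the stochastic Fubini exchanges used in the characteristic-function computation are legitimate; the subtle point is the interplay of small and large values of $\theta$, which is exactly why the hypothesis is stated in terms of $(e^{\theta/2}-1)^2$ rather than the more naive $(e^{\theta}-1)^2$.
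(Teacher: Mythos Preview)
The paper does not prove Theorem~\ref{gir} at all: it is quoted as a known result, with the sentence ``The following theorem needed in the main proofs can be found in \cite{JS} (Theorem~3.24) and \cite{sato} (Theorems~33.1 and~33.2)'' serving in lieu of a proof. Your sketch is therefore not competing with any argument in the paper; rather, it is an outline of the very proof given in those references, based on identifying the density as the Dol\'eans--Dade exponential $\mathcal{E}(M)$ of the compensated jump integral and then reading off the new triplet via the L\'evy--Khintchine representation. In that sense your approach is the standard one and is correct in spirit; the only caveat is that the uniform integrability of $\mathcal{E}(M)$ from the hypothesis $\int(e^{\theta/2}-1)^2\,\Lambda_X<\infty$ is the genuinely delicate step (handled in \cite{sato} by a localisation and Fatou argument rather than a one-line moment bound), so if you intend to write this out in full you should expect that point to require more care than your sketch suggests.
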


\begin{rem}
 The density transformation formula can also be expressed by
\begin{align}\label{dens2}
 \frac{d \PRO_{X}|_{\FF_T }}{d \PRO_Y |_{\FF_T}} (Y(\cdot)) = \exp & \left(  \int_0^T \int_{\mathbb{R} }  \left( e^{\theta(x,s)} -1- \theta(x,s) e^{\theta(x,s)} \right) \Lambda_X (dx,ds)  \right. \notag \\
& \left. - \int_0^T \int_{\mathbb{R}}  \theta (x,s) \bar N_Y (dx,ds) (\cdot)  \right) \quad \PRO_Y \text{-a.s.}
\end{align}
\end{rem}


\subsection{One-sided exit problem with a moving boundary for Brownian motion}

Below, we present a lemma which deals with the one-sided exit problem for Brownian motion including a special kind of boundaries needed in the main proofs.
\begin{lemma}\label{BBgr}
Let $T>1$ and $c>0$ be a constant. Let $(B(t))_{t \geq0}$ be a Brownian motion. Define the function
\begin{align*}
 h_{T} (t)  := \max \left\{ (\ln T)^5 , t^{3/4} \right\}
\end{align*}
and the event
\begin{align*}
 E:= \left\{   B(t) \leq c \cdot h_T(t), \quad t \in [0,T]   \right\}.
\end{align*}
Then, we have 
\begin{align*}
 \pr{}{E^c} \lesssim e^{-(\ln T)^2 /4 }, \quad \text{ as } T \rightarrow \infty.
\end{align*}
\end{lemma}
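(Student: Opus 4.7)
The plan is to decompose the time interval $[0,T]$ at the point $T_1 := (\ln T)^{20/3}$, which is precisely where the two pieces of $h_T$ meet: for $t \leq T_1$ one has $h_T(t) = (\ln T)^5$, while for $t \geq T_1$ one has $h_T(t) = t^{3/4}$. By a union bound, it suffices to control the probability of $B$ exceeding the boundary separately on $[0,T_1]$ and on $[T_1,T]$.

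On $[0,T_1]$ the boundary is constant, so the reflection principle gives
\begin{align*}
\PRO\Bigl(\sup_{0\leq t\leq T_1} B(t) > c(\ln T)^5\Bigr) \leq 2\,\PRO\bigl(B(T_1) > c(\ln T)^5\bigr) = 2\,\PRO\bigl(N(0,1) > c(\ln T)^{5/3}\bigr),
\end{align*}
since $(\ln T)^5/\sqrt{T_1} = (\ln T)^{5/3}$. The standard Gaussian tail bound $\PRO(N(0,1)>x)\leq e^{-x^2/2}$ then yields an upper bound of order $\exp(-c^2(\ln T)^{10/3}/2)$, which is much smaller than $e^{-(\ln T)^2/4}$ for large $T$.

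On $[T_1, T]$, I would use a dyadic decomposition: set $I_k := [2^k T_1, 2^{k+1}T_1]$ for $k=0,1,\dots,K$ where $K\leq \log_2 T$ is chosen so $I_k \subset [T_1,2T]$. On $I_k$ the boundary $c\,t^{3/4}$ is at least $c(2^k T_1)^{3/4}$, so the reflection principle yields
\begin{align*}
\PRO\Bigl(\sup_{t \in I_k} B(t) > c(2^k T_1)^{3/4}\Bigr) \leq 2\,\PRO\bigl(B(2^{k+1}T_1) > c(2^k T_1)^{3/4}\bigr) \leq 2\exp\Bigl(-\tfrac{c^2}{4}\,2^{k/2}T_1^{1/2}\Bigr),
\end{align*}
after simplifying the Gaussian tail since $(2^k T_1)^{3/4}/\sqrt{2^{k+1}T_1} = 2^{k/4}T_1^{1/4}/\sqrt{2}$. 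Summing the geometric-type bound over $k$ gives a total of at most $C(\log_2 T)\exp(-c^2 T_1^{1/2}/4) = C(\log_2 T)\exp(-c^2(\ln T)^{10/3}/4)$.

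Combining the two pieces, the whole probability $\PRO(E^c)$ is dominated by $\exp(-c^2(\ln T)^{10/3}/4)$ up to a polylogarithmic prefactor, and since $(\ln T)^{10/3}$ grows strictly faster than $(\ln T)^2$, this is $\lesssim e^{-(\ln T)^2/4}$ as $T\to\infty$. There is no real obstacle here; the only point to watch is to verify that the exponent $T_1^{1/2} = (\ln T)^{10/3}$ (which is determined by the choice of $3/4$ in the definition of $h_T$ and the matching choice of the threshold $(\ln T)^5$) exceeds the target exponent $(\ln T)^2$ by a power of $\ln T$, so that the logarithmic factor from the dyadic sum and from the tail-bound constants is harmlessly absorbed.
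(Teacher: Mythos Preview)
Your argument is correct. The split at $T_1=(\ln T)^{20/3}$, the reflection-principle bound on $[0,T_1]$, and the dyadic decomposition on $[T_1,T]$ all go through exactly as you describe; the resulting exponent $c^2(\ln T)^{10/3}/4$ comfortably beats the target $(\ln T)^2/4$, so the logarithmic prefactor from the sum is absorbed.

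The paper's proof takes a genuinely different route. Instead of splitting at $T_1$ and treating the two regimes by hand, the authors observe that $h_T(t)\geq g_T(t):=(\ln T)\,t^{3/5}$ for all $t\in[0,T]$ and then invoke a first-exit-density result of Jennen and Lerche for smooth concave boundaries, which yields directly
\[
\PRO(E^c)\ \leq\ \PRO\bigl(\exists\,t\in[0,T]:\ B(t)>c\,g_T(t)\bigr)\ \lesssim\ \Phi\bigl((\ln T)T^{1/10}\bigr)-\Phi(\ln T)\ \lesssim\ e^{-(\ln T)^2/4}.
\]
So the paper trades your elementary union bound for a single application of an external lemma applied to a smooth minorant of $h_T$. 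Your approach is more self-contained and in fact gives a sharper exponent (of order $(\ln T)^{10/3}$ rather than $(\ln T)^{2}$); the paper's approach is shorter once the cited result is granted. Either way, only the crude bound $e^{-(\ln T)^2/4}$ is needed downstream.
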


\begin{proof}
First, note that $h_T (t) \geq g_T (t) := (\ln T) t^{6/10}$ for $t \geq 0$.

Define the event $\tilde{E}$  by
\begin{align*}
\tilde{E}:=  \left\{  B(t) \geq c \cdot g_T (t), \quad t \in[0,T] \right\} .
\end{align*}
Furthermore, denote by $\Phi$ the standard normal distribution function. Applying Theorem 4 and Example 7 in  \cite{jenler} it follows that
\begin{align*}
   \PRO \left(E^c\right)  \leq  \pr{}{\tilde{E}^c} &\lesssim 4 \left( \Phi \left((\ln T) T^{\tfrac{1}{10}} \right) - \Phi (\ln T)   \right) \leq \frac{\sqrt{2}}{\sqrt{ \pi}} e^{- (\ln T)^2 /4},
\end{align*}
for $T$ sufficiently large, which completes the proof.
\end{proof}

\subsection{One-sided exit problem for L\'evy processes}
First, we study the asymptotic behaviour of the first passage time over a constant boundary. If Spitzer's condition holds, then \cite{GreNov}, Lemma 2, proves a similar result. 
\begin{lemma}\label{const1c}
  Let $X$ be a L\'evy process with L\'evy triplet $(\sigma^2 ,b,\nu)$. 
Let $\delta \geq 0$, $0 \leq a < T$ and $0 < c<\infty$. We have 
\begin{align*}
 \PRO (X(t) \leq 1, \text{ } a\leq t \leq T) = T^{-\delta+ o(1) }
\end{align*}
if and only if
\begin{align*}
 \PRO (X(t) \leq c, \text{ } a\leq t \leq T) = T^{-\delta+ o(1)}.
\end{align*}
\end{lemma}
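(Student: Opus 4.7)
Writing $p_a(c, T) := \PRO(X(t) \leq c, a \leq t \leq T)$, the plan is to establish the single master inequality
\[
p_a(c, T) \leq 3\, p_a(c/2, T/2), \qquad T \geq 2a,\ c > 0,
\]
and to iterate it, obtaining $p_a(c, T) \leq 3^k\, p_a(c/2^k, T/2^k)$ for every fixed $k$ and all sufficiently large $T$. Both implications of the lemma will then follow from this iterated bound, applied in two opposite directions, combined with the trivial monotonicity of $p_a$ (non-decreasing in $c$, non-increasing in $T$).

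To derive the master inequality I would introduce the stopping time $\tau := \inf\{t \geq a : X(t) > c/2\}$, which satisfies $\PRO(\tau > T) = p_a(c/2, T)$, and decompose
\[
p_a(c, T) = \PRO(\tau > T) + \PRO(\tau \leq T,\ \sup_{a \leq t \leq T} X(t) \leq c).
\]
On $\{\tau \leq T\}$ one has $X(\tau) \geq c/2$, so by the strong Markov property applied to the shifted process $\tilde X(s) := X(\tau + s) - X(\tau)$ (an independent copy of $X$), the second probability is bounded by $\ER[\mathbf{1}_{\tau \leq T}\, p_0(c/2, T - \tau)]$, where $p_0$ denotes the same quantity with $a$ replaced by $0$. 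Splitting this expectation at $\tau = T/2$, and using the non-increasing monotonicity of $p_0$ in time together with $\PRO(\tau > T/2) = p_a(c/2, T/2)$, one arrives at
\[
p_a(c, T) \leq p_a(c/2, T) + p_0(c/2, T/2) + p_a(c/2, T/2) \leq 3\, p_a(c/2, T/2),
\]
where the last step uses $p_a(c/2, T) \leq p_a(c/2, T/2)$ and $p_0 \leq p_a$.

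Given the iterated bound, the lemma reduces to choosing $k$ appropriately. Assuming $p_a(1, T) = T^{-\delta + o(1)}$, for $c \geq 1$ I pick $k$ with $c/2^k \leq 1$, whereupon
\[
p_a(c, T) \leq 3^k\, p_a(c/2^k, T/2^k) \leq 3^k\, p_a(1, T/2^k) = T^{-\delta + o(1)}
\]
(since $k$ is a constant in $T$), which matches the trivial lower bound $p_a(c, T) \geq p_a(1, T)$. For $c \in (0, 1)$ I instead apply the iteration to the starting level $1$ and pick $k$ with $1/2^k \leq c$: then $p_a(1, T) \leq 3^k\, p_a(1/2^k, T/2^k) \leq 3^k\, p_a(c, T/2^k)$, which after the substitution $T' = T/2^k$ rearranges to $p_a(c, T') \geq T'^{-\delta + o(1)}$ and matches the trivial upper bound $p_a(c, T) \leq p_a(1, T)$. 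The converse implication is strictly symmetric after swapping the roles of $1$ and $c$.

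The only real obstacle is the clean derivation of the master inequality; the key device is the split at $\tau = T/2$, which converts the post-$\tau$ remainder into a $p_0$-term of the same structural form as $p_a(c/2, T/2)$. This is precisely what allows the recursion to close with a bounded numerical constant (here, $3$) independent of $T$ and of the L\'evy triplet, so that a fixed finite number of iterations suffices to change the threshold by any factor without disturbing the polynomial exponent~$\delta$.
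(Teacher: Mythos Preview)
Your proof is correct and follows essentially the same approach as the paper: both introduce the first-passage time above an intermediate level, apply the strong Markov property, split the resulting integral at $T/2$, and obtain the factor $3$ recursion. The only cosmetic differences are that the paper reduces the barrier by integer steps (from $n$ to $n-1$) rather than by halving, and handles the case $c<1$ via rescaling the process by $1/c$ rather than by running the same iteration downward from level $1$; your uniform halving argument is slightly cleaner in this respect.
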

\begin{proof} 
\textit{Case 1:} Let $c>1$. On one hand, we have
\begin{align*}
 \PRO (X(t) \leq 1, \text{ } a\leq t \leq T) \leq   \PRO (X(t) \leq c, \text{ } a  \leq t \leq T).
\end{align*}
On the other hand, let  $2 \leq  \lceil c \rceil := n \in \mathbb{N}$. Then, 
\begin{align*}
p_c (T) :=  \PRO (X(t) \leq c, \text{ } a\leq t \leq T) \leq   \PRO (X(t) \leq n, \text{ } a \leq t \leq T).
\end{align*}
Define $\tau_n := \inf \{ t \geq a: X(t) >n \}$ and let $F_{\tau_{n-1}}$ be the associated distribution function. The stationary and independent increments imply, for every $n \geq 2$, 
\begin{align*}
 p_n (T) & \leq p_{n-1} (T) + \int_a^T p_1 (T-s) dF_{\tau_{n-1}} (s)  \\
&\leq p_{n-1} (T) + p_1 (T/2) \int_a^{T/2} dF_{\tau_{n-1}} (s) + \int_{T/2}^T dF_{\tau_{n-1}} (s) \leq 3 p_{n-1} (T/2).
\end{align*}
 Thus,
\begin{align*}
 p_c (T) \leq p_n (T) \leq 3^{n-1} p_1 (T/2^{n-1}).
\end{align*}

\textit{Case 2:} Now, let $0<c<1$.  Then, on one hand, we have
\begin{align*}
  \PRO (X(t) \leq c, \text{ } a\leq t \leq T) \leq  \PRO (X(t) \leq 1, \text{ } a\leq t \leq T),
\end{align*}
and, on the other hand, analogously to Case 1 we obtain that
\begin{align*}
 p_1 (T) &= \PRO \left(\tfrac{1}{c} X(t) \leq \tfrac{1}{c} , \text{ } a \leq t \leq T  \right)  \leq d_1   \PRO \left(\tfrac{1}{c} X(t) \leq 1, \text{ } a \leq t \leq d_2  T  \right) = d_1  p_c (d_2  T),
\end{align*}
where $d_1, d_2 >0$ are dependent of $c$; and the lemma is proved.
\end{proof}

The following theorem provides a technique to decouple the one-sided boundary problem over different intervals.
\begin{lemma}\label{associated}
 Let $X$ be a L\'evy process with triplet $(\sigma^2, b, \nu)$ and $f: \mathbb{R}_+ \rightarrow \mathbb{R}$ be a measurable function. Let $0\leq a<b<c$. Then,
\begin{align*}
 \PRO &\left(X(t) \leq f(t), \text{ } a \leq t \leq c \right)  \geq \PRO \left(X(t) \leq f(t), \text{ } a \leq t \leq b \right) \cdot \PRO \left(X(t) \leq f(t), \text{ } b \leq t \leq c \right).
\end{align*}
\end{lemma}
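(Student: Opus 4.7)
The claim is an FKG-type positive correlation inequality: setting $A := \{X(t) \leq f(t),\ a \leq t \leq b\}$ and $B := \{X(t) \leq f(t),\ b \leq t \leq c\}$, both events are ``decreasing'' in the pointwise order on c\`adl\`ag paths, so under the Lévy measure they ought to be positively correlated. My plan is to reduce to a finite-dimensional statement and invoke Harris's inequality on a product measure; no fluctuation identity or Spitzer condition is needed.

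Concretely, I would fix a refining sequence of finite grids $a = s_0^{(n)} < s_1^{(n)} < \cdots < s_{N(n)}^{(n)} = c$ with $b = s_{k(n)}^{(n)}$ for some index $k(n)$ and with $\bigcup_n \{s_i^{(n)}\}$ dense in $[a,c]$, and set
\begin{align*}
A_n &:= \{X(s_i^{(n)}) \leq f(s_i^{(n)}),\ 0 \leq i \leq k(n)\}, \\
B_n &:= \{X(s_i^{(n)}) \leq f(s_i^{(n)}),\ k(n) \leq i \leq N(n)\}.
\end{align*}
The increments $\xi_j^{(n)} := X(s_j^{(n)}) - X(s_{j-1}^{(n)})$ are independent by the Lévy property, and $X(s_i^{(n)}) = \sum_{j \leq i} \xi_j^{(n)}$, so both $\mathbf{1}_{A_n}$ and $\mathbf{1}_{B_n}$ are coordinate-wise non-increasing functions of $(\xi_1^{(n)},\ldots,\xi_{N(n)}^{(n)})$: raising any $\xi_j^{(n)}$ pushes the partial sums $X(s_i^{(n)})$ upward for all $i \geq j$, which can only violate the ``stays below $f$'' constraints. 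Harris's inequality for independent random variables then yields $\PRO(A_n \cap B_n) \geq \PRO(A_n)\,\PRO(B_n)$.

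The last step is to pass to the limit as $n \to \infty$. The events $A_n$ are decreasing in $n$, so $A_n \downarrow A_\infty := \bigcap_n A_n$; by right-continuity of Lévy paths together with right-continuity of $f$ (which holds in every application of this lemma in the paper, where $f$ is in fact continuous), a standard density argument identifies $A_\infty$ with $A$ up to a null set, so $\PRO(A_n) \downarrow \PRO(A)$, and analogously for $B_n$ and for $A_n \cap B_n$. Monotone convergence of probabilities then transfers Harris's inequality to the continuous-time events, delivering the claim. The one mildly delicate point is this last passage to the limit for a truly irregular measurable $f$, but that situation does not arise in the present paper.
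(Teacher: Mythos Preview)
Your argument is correct and is essentially the same as the paper's: the paper observes that $(X(t_1),\ldots,X(t_n))$ are associated in the sense of Esary--Proschan--Walkup (being partial sums of independent increments), so the two indicator functions, as limits of non-increasing functions of these variables, are positively correlated---precisely your Harris/FKG step followed by a limit. Your write-up is in fact more careful about the passage to the limit (noting the need for right-continuity of $f$), a point the paper leaves implicit.
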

\begin{proof}
 For any choice of $n$ and $0\leq t_1 < ...< t_n$ the random variables $(X(t_i))^n_{i=1}$ are associated (cf.\ \cite{ass}), since they are sums of independent random variables. Hence, the functions $\mathbf{1}_{\{ X(t) \leq f(t), \text{ } a\leq t \leq b\} }$ and
$\mathbf{1}_{\{X(t) \leq f(t), \text{ } b\leq t \leq c \}}$ can both be written as limits of decreasing functions of associated random variables and are thus also associated. Hence, we obtain the desired assertion.
\end{proof}

Furthermore, we need a result for one-sided exit problem with a  boundary that is an increasing function of $T$.
\begin{lemma}\label{helpln}
 Let $X$ be a L\'evy process with L\'evy triplet $(\sigma^2,b,\nu)$. Then we have, for $T$ sufficiently large,
\begin{align*}
  \PRO & ( X(t) \leq 3, \text{ }0 \leq t \leq T) \\
& \geq \tfrac{1}{2} \PRO \left( X(t) \leq 3 - t^{1/3}, \text{  } 0 \leq t \leq  ( \ln T)^{21}  \right) \cdot \PRO \left(X (t) \leq 3 + (\ln T)^6, \text{  } 1 \leq t \leq T \right). 
\end{align*}

\end{lemma}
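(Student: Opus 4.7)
My plan is to decompose the interval $[0,T]$ at the time $T_{0}:=(\ln T)^{21}$ using the Markov property of the L\'evy process, and then to bring the second factor of the right-hand side into play via Lemma~\ref{associated}. The key observation that motivates the choice of $T_{0}$ is that $T_{0}^{1/3}=(\ln T)^{7}$ sits comfortably between the two growth rates that appear on the right-hand side: it is $\ge 3+(\ln T)^{6}$ for large $T$, and it is exactly the value forced on $X(T_{0})$ by the ``dropping'' boundary $3-t^{1/3}$.

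\medskip

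First I would isolate the event $E_{1}:=\{X(t)\le 3-t^{1/3},\ 0\le t\le T_{0}\}$, which already forces $X(t)\le 3$ on $[0,T_{0}]$ and in particular $X(T_{0})\le 3-(\ln T)^{7}$. On $E_{1}$, a sufficient condition for $X(t)\le 3$ to persist on $[T_{0},T]$ is that the independent shifted process $\widetilde X(s):=X(T_{0}+s)-X(T_{0})$ satisfies $\widetilde X(s)\le (\ln T)^{7}$ for all $0\le s\le T-T_{0}$. Using the stationary independent increments one obtains
\begin{equation*}
\PRO\bigl(X(t)\le 3,\ 0\le t\le T\bigr)\;\ge\; \PRO(E_{1})\cdot \PRO\bigl(X(s)\le (\ln T)^{7},\ 0\le s\le T-T_{0}\bigr).
\end{equation*}
The first factor is the first factor of the right-hand side of the lemma, so it remains to lower-bound the second factor by $\tfrac12\,\PRO(X(t)\le 3+(\ln T)^{6},\ 1\le t\le T)$.

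\medskip

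For this, I would chain together three trivial monotonicities and one application of Lemma~\ref{associated}. First, shrinking the time interval from $[0,T-T_{0}]$ up to $[0,T]$ can only decrease the probability, so
\begin{equation*}
\PRO\bigl(X(s)\le (\ln T)^{7},\ 0\le s\le T-T_{0}\bigr)\;\ge\; \PRO\bigl(X(s)\le (\ln T)^{7},\ 0\le s\le T\bigr).
\end{equation*}
Second, for $T$ large enough $(\ln T)^{7}\ge 3+(\ln T)^{6}$, which lets me replace the barrier by $3+(\ln T)^{6}$. Third, Lemma~\ref{associated} applied with $a=0$, $b=1$, $c=T$ gives
\begin{equation*}
\PRO\bigl(X(s)\le 3+(\ln T)^{6},\ 0\le s\le T\bigr)\;\ge\; \PRO\bigl(X(s)\le 3+(\ln T)^{6},\ 0\le s\le 1\bigr)\cdot \PRO\bigl(X(s)\le 3+(\ln T)^{6},\ 1\le s\le T\bigr).
\end{equation*}
The second factor here is precisely the desired one, and the first factor is $\ge \tfrac12$ for $T$ large because $\sup_{0\le s\le 1}X(s)$ is finite a.s.\ and hence eventually dominated by $3+(\ln T)^{6}$ with probability close to $1$. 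Combining these inequalities gives the claimed bound with the explicit prefactor $\tfrac12$.

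\medskip

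There is no genuine obstacle: the argument is really a bookkeeping exercise. The only point that needs to be chosen carefully is the intermediate height $(\ln T)^{7}$, which must simultaneously (i) equal $T_{0}^{1/3}$ so the Markov decomposition after $E_{1}$ closes, and (ii) eventually dominate $3+(\ln T)^{6}$ so the subsequent monotonicity chain reaches the second factor on the right-hand side. The choice $T_{0}=(\ln T)^{21}$ is dictated by exactly this compatibility.
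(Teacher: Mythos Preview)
Your argument is correct and follows essentially the same route as the paper's own proof: decompose at $T_{0}=(\ln T)^{21}$ using stationary independent increments, exploit $X(T_{0})\le 3-(\ln T)^{7}$ together with $(\ln T)^{7}\ge 3+(\ln T)^{6}$ to close the bound on $[T_{0},T]$, and then invoke Lemma~\ref{associated} at $b=1$ to extract the factor $\tfrac12$. The only cosmetic difference is that the paper argues in the reverse direction (starting from the product on the right-hand side and showing it is bounded above by $\PRO(X(t)\le 3,\,0\le t\le T)$), whereas you start from the left-hand side and bound it below; the intermediate passage you make from the interval $[0,T-T_{0}]$ to $[0,T]$ corresponds exactly to the paper's passage from $\{X(t)\le 3+(\ln T)^{6},\,0\le t\le T\}$ to its restriction to $t\ge T_{0}$.
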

\begin{proof}
Note that that $(\ln T)^7 \geq 3 + (\ln T)^6$, for $T$ sufficiently large, and due to the stationary and independent increments of $(X (t) )_{t \geq 0}$ we have, for $T $ sufficiently large, 
\begin{align*}
\PRO & \left(   X (t) \leq 3 -t^{1/3} , \text{  } 0 \leq t \leq (\ln T)^{21}  \right)  \cdot \PRO \left(   X(t)  \leq 3 + (\ln T)^6, \text{  }    0 \leq t \leq T   \right) \\
& \leq \PRO \left(   X (t) \leq 3 -t^{1/3} , \text{  } 0 \leq t \leq (\ln T)^{21}  \right)\\
& \quad \cdot \PRO \Bigl(   X(t) - X ((\ln T)^{21}) \leq 3 + (\ln T)^6, \text{  }  ( \ln T)^{21}  \leq t \leq  T  \Bigr) \\
& \leq \PRO \left(  \{  X (t) \leq 3 -t^{1/3} , \text{  } 0 \leq t \leq (\ln T)^{21} \}  \cap \{ X(t) \leq 3 , \text{  }  ( \ln T)^{21}  \leq t \leq T\}  \right) \\
& \leq  \PRO  (X (t) \leq 3 , \text{  } 0 \leq t \leq T).
\end{align*} 
Lemma \ref{associated} yields
\begin{align*}
 \PRO \left(   X(t)  \leq 3 + (\ln T)^6, \text{  }    0 \leq t \leq T   \right) \geq \tfrac{1}{2}  \PRO \left(   X(t)  \leq 3 + (\ln T)^6, \text{  }    1 \leq t \leq T   \right),
\end{align*}
since $\PRO (X(t)  \leq 3 + (\ln T)^6, \text{ } 0\leq t \leq 1)>\tfrac{1}{2}$, for $T$ sufficiently large.
\end{proof}

 Here, we show that, if the boundary is equal to $t^{\alpha}$, $\alpha>1/2$ then the probability of the one-sided exit problem for a L\'evy martingale with $\ER \left( |X(1)|^q \right) < \infty$, for some $q >4$, over the boundary $t^{\alpha}$ is larger than a constant.
\begin{lemma}\label{coup}
Let $X$ be a L\'evy martingale with  $\ER \left( |X(1)|^q \right) < \infty$, for some $q >4$. Then, for any $\alpha >1/2$,
\begin{align*}
\PRO \left(X (t) \leq t^{\alpha}, \text{ } 1 \leq t \leq T \right) \gtrsim c, \quad \text{ as } T \rightarrow \infty,
\end{align*}
where $c>0$ is a constant depending only on $X$ and $\alpha$.
\end{lemma}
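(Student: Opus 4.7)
The plan is to combine a Doob-type maximal inequality with a dyadic decomposition of $[1,T]$. Set $I_k=[2^k,2^{k+1}]$ for $k=0,1,\ldots,K$ with $2^K\leq T<2^{K+1}$. On $I_k$ the boundary satisfies $t^\alpha\geq 2^{k\alpha}$, so
\[
\PRO\bigl(\exists t\in I_k: X(t)>t^\alpha\bigr)\leq \PRO\Bigl(\sup_{s\leq 2^{k+1}} X(s)>2^{k\alpha}\Bigr).
\]
By Doob's $L^q$-maximal inequality the right-hand side is at most $(q/(q-1))^q\,2^{-kq\alpha}\,\ER\left(|X(2^{k+1})|^q\right)$, and the standard moment bound for L\'evy martingales (via Marcinkiewicz--Zygmund applied to the i.i.d.\ integer increments of $X$, combined with Doob on the fractional part) yields $\ER(|X(t)|^q)\leq C\,t^{q/2}$ for all $t\geq 1$, with $C$ depending only on $X$ and $q$. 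Substituting gives the geometric bound
\[
\PRO\bigl(\exists t\in I_k: X(t)>t^\alpha\bigr)\leq C'\,2^{-kq(\alpha-1/2)}.
\]

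Since $\alpha>1/2$ and $q>4$, the exponent $q(\alpha-1/2)$ is strictly positive, so $\sum_k 2^{-kq(\alpha-1/2)}<\infty$. Given any $\eps>0$ I would fix $k_0$ independent of $T$ with $\sum_{k\geq k_0}C'\,2^{-kq(\alpha-1/2)}\leq \eps$; a union bound then yields
\[
\PRO(X(t)\leq t^\alpha,\ 2^{k_0}\leq t\leq T)\geq 1-\eps
\]
uniformly in $T\geq 2^{k_0}$. The short initial block $[1,2^{k_0}]$ has length independent of $T$, and by Lemma~\ref{associated},
\[
\PRO(X(t)\leq t^\alpha,\ 1\leq t\leq T)\geq \PRO(X(t)\leq t^\alpha,\ 1\leq t\leq 2^{k_0})\cdot(1-\eps).
\]
The first factor is at least $\PRO(\sup_{t\leq 2^{k_0}} X(t)\leq 1)$, which is strictly positive by an optional-sampling argument: if $\tau=\inf\{t: X(t)>1\}$ satisfied $\PRO(\tau\leq 2^{k_0})=1$, then $X(\tau)\geq 1$ almost surely by the c\`adl\`ag property, $\sup_{t\leq 2^{k_0}}|X(t)|\in L^q\subset L^1$ by Doob, and optional sampling at the bounded stopping time $\tau\wedge 2^{k_0}$ would force $0=\ER(X(0))=\ER(X(\tau\wedge 2^{k_0}))\geq 1$, a contradiction. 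Taking $\eps=1/2$ then furnishes a positive lower bound independent of $T$.

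The main technical obstacle is the uniform moment estimate $\ER(|X(t)|^q)\lesssim t^{q/2}$ for $t\geq 1$ deduced from the single-time hypothesis $\ER(|X(1)|^q)<\infty$; once this is in hand the dyadic summation and the factorization via Lemma~\ref{associated} are routine. The condition $q>4$ is in fact a little stronger than needed for the geometric summability alone (any $q>2$ together with $\alpha>1/2$ would suffice), and presumably reflects requirements from the places where Lemma~\ref{coup} is subsequently invoked rather than a constraint intrinsic to the statement itself.
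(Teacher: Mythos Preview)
Your argument is correct and follows the same high-level architecture as the paper's proof: factor off an initial block of fixed length via Lemma~\ref{associated}, then show by a union bound that the probability of ever exceeding the boundary on the remaining interval is bounded away from~$1$ uniformly in~$T$. The details, however, differ in an instructive way.

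The paper decomposes $[K,T]$ into \emph{unit} intervals $(n,n+1]$ and, on each, splits the crossing event into $\{X(n)\geq n^{\alpha-\beta}\}$ and its complement. The first part is controlled by a moderate-deviation estimate from Petrov (page~254), which is where the hypothesis $q>4$ enters: one needs $q>2(1+\eps)+2$ for some $\eps>0$ to get the bound $\PRO(X(n)/\sqrt{n}\geq 3\sqrt{\ln n})\lesssim n^{-(1+\eps)}$. The second part is handled by Doob's inequality applied to the single increment $X(\cdot)-X(n)$ on $(n,n+1]$. Your dyadic decomposition instead applies Doob's $L^q$ maximal inequality globally on $[0,2^{k+1}]$ and then invokes the Marcinkiewicz--Zygmund (or Rosenthal) bound $\ER(|X(t)|^q)\lesssim t^{q/2}$; this replaces the Petrov input by an elementary moment estimate and yields a summable geometric series for any $q\geq 2$.

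So your route is genuinely simpler and shows that $q>4$ is an artefact of the paper's proof method rather than of the statement. Your closing speculation, though, is slightly off target: the restriction $q>4$ is not imposed by later applications of the lemma (in fact the paper only applies Lemma~\ref{coup} to a process $\tilde Z$ with finite exponential moments), but is exactly what the Petrov bound in the paper's own proof requires. Your explicit optional-sampling justification that $\PRO(\sup_{t\leq 2^{k_0}}X(t)\leq 1)>0$ is also a nice addition; the paper simply takes $g(K)>0$ for granted.
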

\begin{proof}
First note that there exists  $\eps>0$ such that $q> 2(1+\eps) +2$. Since $\alpha>1/2$ there exists $\beta >0$ such that $\alpha -\beta  -\tfrac{1}{2} >0$. Choose natural number $K := K(X,\alpha,\beta) >0$ independent of $T$ such that $K \geq 2^{1/\beta}$ and 
\begin{align}\label{sum}
 \sum_{n=K}^{\infty}   n^{-(1+\eps)}    \leq \frac{1}{2} \left[ \frac{\sqrt{2}}{3 \sqrt{\pi}} + 2^{-(1+\eps)/\alpha}  \ER \left(  |X(1)|^{(1+\eps)/\alpha} \right)  \right]^{-1} .
\end{align}
 Then,  Lemma \ref{associated} yields for every $T>K$
\begin{align}\label{slep1}
 g(T) &:=  \PRO \left(  X(t) \leq t^{\alpha} , \text{ } 1\leq t\leq T \right) \notag \\
&\geq g(K) \cdot \left( 1- \PRO \left( \exists \text{ } t \in [K,T]: X(t) > t^{\alpha}\right)  \right)  \notag  \\
&\geq g(K) \cdot \left( 1- \sum_{n=K}^{\lfloor T \rfloor} \PRO \left( \exists \text{ } t \in (n,n+1]: X(t) > t^{\alpha}\right)  \right). 
\end{align}
On the other hand, due to the stationary and independent increments we obtain, for all $n \geq K$,
\begin{align}\label{alln1}
 \PRO & \left( \exists \text{ } s \in (n,n+1]: X (s) > s^{\alpha}\right) \notag  \\
& \leq \PRO \left( X(n) \geq n^{\alpha -\beta} \right) + \PRO \left( \{X(n) < n^{\alpha -\beta} \} \cap \{ \exists \text{ } s \in (n,n+1]: X(s) > s^{\alpha} \}\right) \notag  \\
& \leq \PRO \left( X(n)/\sqrt{n} \geq n^{\alpha -\beta - 1/2} \right) + \PRO  \left(\exists \text{ } s \in (n,n+1]: X(s) -X(n) > s^{\alpha} - n^{\alpha -\beta} \right) \notag  \\
& \leq  \PRO \left( X(n)/\sqrt{n} \geq 3 \sqrt{\ln n} \right)      + \PRO  \left(\exists \text{ } s \in (n,n+1]: X(s) -X(n) > \tfrac12 n^{\alpha} \right) \notag  \\  
& \leq  \frac{\sqrt{2}}{3 \sqrt{\pi \ln n}}\cdot  n^{-(1+\eps)}  + \PRO  \left( \exists \text{ }s \in (0,1]: |X(s)| > \tfrac12 n^{\alpha} \right)\notag  \\ 
& \leq  \frac{\sqrt{2}}{3 \sqrt{\pi}}\cdot n^{-(1+\eps)}  + 2^{-(1+\eps)/\alpha}  \ER\left(   |X(1)|^{(1+\eps)/\alpha} \right) \cdot n^{-(1+\eps)}  ,
\end{align}
where we used in the second last step  a result of \cite{petrov}, page 254, and in the last step Doob's martingale inequality. Putting (\ref{alln1}) and (\ref{sum}) into (\ref{slep1}) yields
\begin{align*}
 g(T) \geq g(K)/2 >0,
\end{align*}
which proves the lemma. 
\end{proof}

\subsection{Coupling}

With the help of a coupling method we also obtain an upper bound for the one-sided exit problem for a L\'evy martingale with some finite exponential moment. 
\begin{lemma}\label{coup2}
Let $c>0$. Let $X_1$ and $X_2$ be two independent L\'evy processes, where $X_2$ is a martingale with some finite exponential moment, i.e. $\ER \left( e^{b|X_2(1)|} \right) < \infty$, for some $b >0$. Furthermore, let $\ER \left( X_2  (1)^2 \right)  =a$. Let $B$ be a Brownian motion and $f : \mathbb{R}_+ \rightarrow \mathbb{R}_+$ be a non-decreasing function such that there exists a constant $d>0$ with $f(T) \leq d \cdot T$, for $T$ sufficiently large.
Then there is a $\kappa_c>0$ depending on $c$ such that, for $T$ sufficiently large, 
\begin{align*}
 &\PRO \Bigl( X_1 (t) + X_2 (f(t)) \leq 1 , \text{ } 1 \leq t \leq T \Bigr) \\
&\leq  \PRO \Bigl( X_1 (t) + aB(f(t)) \leq 1 + \kappa_c \ln T, \text{ } 1 \leq t \leq T \Bigr) + T^{-c }.
\end{align*}
\end{lemma}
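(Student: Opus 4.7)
My strategy is to construct $X_2$ and $B$ on a common probability space (with $X_1$ independent of both) via a strong approximation of Komlós--Major--Tusnády / Sakhanenko type. Since $X_2$ is a centered Lévy process with a finite exponential moment $\ER(e^{b|X_2(1)|}) < \infty$, one can embed a Brownian motion $B$ (with variance parameter matched to that of $X_2$, so that the pair $(X_2(s), aB(s))$ is the intended coupling) such that for every $\lambda > 0$ there is a constant $C_\lambda > 0$ satisfying
\begin{equation*}
\PRO \Bigl( \sup_{0 \le s \le S} \bigl|X_2(s) - aB(s)\bigr| > C_\lambda \ln S \Bigr) \le S^{-\lambda}, \qquad S \ge 2.
\end{equation*}
A clean way to secure this is to apply the classical Sakhanenko inequality to the i.i.d.\ increments $(X_2(n) - X_2(n-1))_{n \in \mathbb{N}}$ at integer times, and then bridge from integers to all of $[0,S]$ via the exponential-moment tail of the unit-interval maximum, $\PRO(\sup_{s \in [n-1,n]}|X_2(s) - X_2(n-1)| > x) \le K e^{-\lambda' x}$, combined with a union bound over $\lceil S \rceil$ intervals.

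Taking $S = f(T)$ and using $f(T) \le dT$, I would choose $\lambda$ large enough that $(dT)^{-\lambda} \le T^{-c}$ and set $\kappa_c$ so that $C_\lambda \ln(dT) \le \kappa_c \ln T$ for $T$ sufficiently large. Then the good event
\begin{equation*}
\Omega_T := \Bigl\{ \sup_{t \in [1,T]} \bigl|X_2(f(t)) - aB(f(t))\bigr| \le \kappa_c \ln T \Bigr\}
\end{equation*}
has probability at least $1 - T^{-c}$, using that $\{f(t) : t \in [1,T]\} \subseteq [0, f(T)]$. On $\Omega_T$ the pointwise inequality $aB(f(t)) \le X_2(f(t)) + \kappa_c \ln T$ holds uniformly in $t \in [1,T]$, giving the set inclusion
\begin{equation*}
\bigl\{X_1(t) + X_2(f(t)) \le 1,\, 1 \le t \le T\bigr\} \cap \Omega_T \subseteq \bigl\{X_1(t) + aB(f(t)) \le 1 + \kappa_c \ln T,\, 1 \le t \le T\bigr\}.
\end{equation*}
Splitting the left-hand side over $\Omega_T$ and $\Omega_T^c$ and bounding the latter by $T^{-c}$ produces the claimed inequality; independence of $X_1$ from $(X_2, B)$ ensures the joint law of $(X_1, B)$ on the target side is unchanged.

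The main obstacle is securing the strong approximation with both the logarithmic rate $O(\ln S)$ \emph{and} a polynomial tail of arbitrarily high order in $S$ — this is not a soft consequence of having an exponential moment, and the sharpness of the logarithmic rate is precisely what makes the $\kappa_c \ln T$ relaxation on the target side sufficient. A secondary but routine issue is checking that the time-change by $f$ does not degrade the bound, which is handled by $f(T) \le dT$ costing only a constant factor inside the logarithm.
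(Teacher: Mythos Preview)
Your proposal is correct and follows essentially the same route as the paper: invoke a KMT-type strong approximation between $X_2$ and $aB$ yielding $\PRO(\sup_{s\le S}|X_2(s)-aB(s)|>C\ln S)\le S^{-\lambda}$, transfer this to the time-changed process via $f(T)\le dT$, then split over the good coupling event $\Omega_T$ and its complement. The paper simply cites \cite{KMT} for the coupling input, whereas you sketch more carefully how to obtain it (Sakhanenko at integer times plus exponential tails on unit-interval maxima), which is a welcome elaboration but not a different argument.
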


\begin{proof}
 Since $X_2$ has some finite exponential moment and $\ER X_2 (1)^2 =a$, one can couple it with a Brownian motion $aB$ (compare to the Koml\'os-Major-Tusn\'ady coupling (KMT theorem), \cite{KMT}) in such a way that, for a suitable $\kappa_c >0$ and $T$ sufficiently large,
\begin{align*}
 \PRO \left(  \sup_{0 \leq t\leq T} |X_2 (t) - aB (t) |> \frac{\kappa_c}{2} \ln T \right) \leq T^{-c}.
\end{align*}
Since $f(T)\leq d \cdot T$, for $T$ sufficiently large, we have
\begin{align}\label{coupl2}
 & \PRO \left(  \sup_{1 \leq t\leq T} |X_2 (f(t)) - aB (f(t)) |>  \kappa_c \ln T \right) \notag \\
& \quad \leq  \PRO \left(  \sup_{0 \leq t\leq d T} |X_2 (t) - aB (t) |> \kappa_c \ln T \right) \notag \\
& \quad \leq \PRO \left(  \sup_{0 \leq t\leq  \max\{T,dT\} } |X_2 (t) - aB (t) |> \frac{\kappa_c}{2} \ln (\max\{T,dT\} ) \right) \notag \\
&\quad  \leq  \min\{1, d^{-c}\} T^{-c} \leq T^{-c}.
\end{align}

Define 
\begin{align*}
 A:= \left\{ \sup_{ 1 \leq t \leq T}  |X_2 (f(t)) - aB (f(t)) |\leq  \kappa_c \ln T \right\}
\end{align*}
to be the set where the coupling works. Then, by inequality  (\ref{coupl2}), for $T$ sufficiently large,
\begin{align*}
 & \PRO \Bigl( X_1 (t) + X_2 (f(t)) \leq 1 , \text{ } 1 \leq t \leq T \Bigr) \\
 & \leq  \PRO \Bigl( X_1 (t) + X_2 (f(t)) \leq 1 , \text{ } 1 \leq t \leq T; A \Bigr) + \PRO \Bigl( A^c \Bigr)\\
& \leq \PRO \Bigl( X_1 (t) + aB (f(t)) \leq 1 + \kappa_c \ln T, \text{ } 1 \leq t \leq T \Bigr) +  T^{-c},
\end{align*}
which completes the proof.
\end{proof}



\section{Proof of Theorem \ref{falling} (negative boundary) }\label{proofthm2}
 Since $f(t)$ is positive, our quantity is trivially bounded from above as follows
\begin{align*}
  \PRO (X(t) \leq 1- f(t), \text{ } 0\leq t \leq T) \leq \PRO (X(t) \leq 1, \text{ } 0\leq t \leq T) =  T^{-\delta + o(1)}.
\end{align*}
In order to prove the  lower bound we  can assume that $T>1$ during the further progress of the proof and introduce the auxiliary functions $H^i_\beta$ and $f_n$. 

We define
\begin{align*}
 H(x) := x \exp\left(   -\sqrt{c_1 ||f'||^2_{L_2 [1,\infty)} \ln (1 / x )} - c_2 ||f'||^2_{L_2 [1,\infty)} \right), \text{ for  } x  \in (0,1],
\end{align*}
where $c_1,c_2 >0$ are constants depending on $\nu$ and $f$ specified later.
Note that $H'(x) >0$ on $(0,1]$.
 Next, define $H^{i}_\beta $ by $H^0_\beta (x) := x$ and, for $i \geq 1$, 
\begin{align*}
 H^i_\beta (x) := H^{i-1}_\beta \Bigl( H(x \cdot \beta) \Bigr)
\end{align*}
with $0<\beta <1$ specified later. Note that $H^i_\beta $ is well defined since  $H(x)\in (0,1]$ for $x \in (0,1]$.\\

Next, we define  $f_0 (t) := \max \{ f(\ln T),f(t) \}$ and, for $n \geq 1$,
\begin{align*}
   f_n (t) :=  \max \left\{ 1,  \left( f_{n-1} (t)- f_{n-1} (\ln T)\right)^{2/3} \right\} + f_{n-1} (\ln T), \quad t \geq 0.
\end{align*}
Furthermore, define $\tilde{t}_n := \sup \{s\geq 0: f_{n-1} (s)-f_{n-1} (\ln T) \leq 1 \}$. Note that $f'_n(t)=0$, for $t\in (0,\tilde{t}_n)$, and
\begin{align*}
 f'_n (t) =  \tfrac{2}{3}  \left( f_{n-1}(t) - f_{n-1} (\ln T) \right)^{-2/3}  f_{n-1}'(t) \quad  \text{ a.e.,  for }  t>\tilde{t}_n,
\end{align*}
thus, 
\begin{align}\label{incr2}
 0 \leq f'_{n} (t) \leq f' (t) \text{ a.e., }
\end{align}
since $f' \geq 0$.
In the following proof we use 
\begin{align}\label{findu}
 f_n(t) \leq f(\ln T) + n + \max \{  1, f(t)^{(2/3)^n}  \}, \quad \text{ for all } t \geq 0, 
\end{align}
which can be proved by induction.

We proceed with the proof of the lower bound which includes two iterations.

\subsection{External iteration}
In this section we provide an iteration method in order to apply the results of Section \ref{initer}. This additional step is required because of technical details in Section \ref{initer} which contains the main idea of this proof. Therefore, define, for any $T>1$,
\begin{align*}
 G(T) :=  \PRO (X(t) \leq 1- f(t), \text{ } \ln T \leq t \leq T). 
\end{align*}
In Section \ref{initer} we will prove that 
\begin{align} \label{subB}
 G(T) \geq T^{-\delta+o(1)} \cdot G(\ln T), \quad \text{ for all } T>1.
\end{align}
Recall that $\ln^* (T)$ is the number of times the logarithm function must be iteratively applied before the result is less than or equal to one.  Denote $\ln^n (T) $  the $n$-times iteratively applied logarithm and $\ln^0 (T) := T$. Moreover, note that $\ln^* (T)$ decays slower than $\ln^k (T)$, for every $k$.

Lemma \ref{associated} yields
\begin{align*}
 \PRO & (X(t) \leq 1- f(t), \text{ } 0 \leq t \leq T) \\
&\geq \PRO (X(t) \leq 1- f(t), \text{ } 0 \leq t \leq \ln^{\ln^* (T)} ) \cdot G(\ln^{\ln^*(T)-1}(T)) \cdot ...\cdot G(\ln T) \cdot G(T)\\
&\geq G(1) \cdot G(\ln^{\ln^*(T)-1}(T)) \cdot ...\cdot G(\ln T) \cdot G(T)\\
&= G(1) \prod_{k=0}^{\ln^*( T)-1} G( \ln^k (T)).
 \end{align*}
Combining this with (\ref{subB}) and the fact that $\ln^j (T) \leq \ln^k (T)$, for all $j \geq k \geq 0$, which will be used in the third and fourth  step, and $\ln^* (T) \leq \ln^3 (T)$, for $T$ sufficiently large, we obtain that 
\begin{align*}
 \PRO& \Bigl(X(t) \leq 1- f(t), \text{ } 0 \leq t \leq T \Bigr) \\
&\geq  G(1) \left( \prod_{k=1}^{\ln^*( T)-1} G( \ln^{k} (T))  \right) \cdot G(\ln T) \cdot T^{-\delta +o(1)} \\
&\geq   G(1)^{\ln^* (T)+1}  \left( \prod_{k=1}^{\ln^*( T)-1}  \prod_{j=k}^{\ln^* (T) -1}  \left( \ln^j (T)\right)^{-\delta +o(1)} \right)   \cdot T^{-\delta +o(1)} \\
 &\geq G(1)^{\ln^* (T)+1} \left( \prod_{k=1}^{\ln^*( T)-1} \left(  (\ln^k (T))^{-\delta +o(1)}  \right)^{\ln^* (T) -k} \right) \cdot T^{-\delta +o(1)}\\
&\geq   G(1)^{\ln^* (T)+1}  \left(     \prod_{k=1}^{\ln^*( T)-1}  \left( \ln^1 (T)\right)^{-(\ln^* (T) -k) \delta +(\ln^* (T) -k)o(1)} \right) \cdot T^{-\delta +o(1)} \\
&\geq   G(1)^{\ln^* (T)+1}  \left(  \ln  T \right)^{-\ln^3 (T)} \cdot T^{-\delta +o(1)} \\
&= T^{-\delta +o(1)},
\end{align*}
 and this is precisely the assertion of the theorem.

\subsection{Internal iteration; proof of (\ref{subB}) }\label{initer}

First, define 
\begin{align*}
g_n (T) :=  \PRO (X(t) \leq 1 - f_n (t), \text{  } \ln T \leq t \leq T).
\end{align*}




\paragraph*{Step 1: Proof of (\ref{mass2})} \label{change}
By using a change of measure the aim of this step is to show the following inequality 
\begin{align}\label{mass2}
g_n (T)
&\geq \PRO (Y_n (t) \leq 1, \text{  } \ln T \leq t \leq T) \notag \\
 & \quad \cdot \exp \left(- \sqrt{c_1 ||f'||^2_{L_2 [1,\infty)} \ln (1/ \PRO (Y_n(t) \leq 1, \text{  } \ln T \leq t \leq T) )} - c_2   ||f'||^2_{L_2 [1,\infty)} \right) \notag\\
&= H \left(\PRO (Y_n (t) \leq 1, \text{  } \ln T \leq t \leq T) \right),
\end{align}
where $c_1,c_2 >0$ are constants depending on $\nu$ and $f$ that are chosen later on.

Without loss of generality let $\nu ([-1,0)) >0$. If $\nu ([-1,0)) = 0$ then we multiply $X$ by $d>0$ suitably chosen such that $\tilde{\nu} ([-1,0]))>0$, where $\tilde{\nu}$ is the L\'evy measure of $d\cdot X$. Such  $d>0$ exists since $\nu(\mathbb{R}_-)>0$. Due to Lemma \ref{const1c} we can continue with the process $d \cdot X$ instead of $X$ in the same manner.  

Since $\nu ([-1,0)) >0$ we can choose  a compact set $A \subseteq [-1 ,0)$ such that 
\begin{align*}
0<  \int_A x^2 \nu(dx)=: m < \infty.
\end{align*}

 Let $\tilde{X}_n$ and $Y_n$ be two additive processes with triplets $(\sigma^2, f_{\tilde{X}_n} (t), \nu(dx)ds )$ and  \\
$(\sigma^2, f_{Y_n} (t) , (1+ \frac{f_n'(t) |x|}{m} \mathbf{1}_{\{x \in A \}}) \nu(dx)ds )$ respectively, where $f_{Y_n} (t) := b\cdot t + f_n (\ln T)$ and $f_{\tilde{X}_n} (t) := b\cdot t + f_n (t)$. 

 Then,  $\PRO_{\tilde{X}_n} |_{\FF_T}$ and $\PRO_{Y_n} |_{\FF_T}$ are absolutely continuous because of the following considerations. Define $\theta(x,s) := \ln (1 + \tfrac{ f_n'(s)|x|}{m} \mathbf{1}_{\{x \in A\}})$, for all $ s \in [0,T]$ and  $x \in \mathbb{R}$. Using the fact that $f_n' (s) = 0 $for $s \in (0, \ln T)$ we have, for $t>\ln T$, 
\begin{align*}
 f_{Y_n} (t)  &=  bt+f_n(\ln T) = bt + f_n (t) - \int_{\ln T}^t f'_n (s) ds\\
&= f_{\tilde{X}_n} (t)  + \int_0^t \int_{|x|\leq 1 } \left( e^{\theta (x,s)} -1  \right) x  \nu(dx)ds
\end{align*}
and since $f_n (t) = f_n (\ln T)$, for $t\in [0,\ln T]$,
\begin{align*}
  f_{Y_n} (t)  = bt + f_n (\ln T) = bt +f_n (t)   = f_{\tilde{X}_n} (t) .
\end{align*}
In this connection, one should point out that $-f'_n (s) x \mathbf{1}_{\{x \in A\}} = f'_n (s) |x|  \mathbf{1}_{\{x \in A\}}\geq 0$ almost everywhere.

Define $\Lambda_{Y_n} (dx, ds) := \exp( \theta (x,s)) \nu (dx)ds$.  
According to the choice of the L\'evy measures, $\nu(dx)ds$ and $\Lambda_{Y_n} (dx,ds)$ are  absolutely continuous with  $\frac{d \Lambda_{Y_n} (x,s)}{\nu (dx) ds} = e^{\theta (x,s)}$. In order to apply Theorem \ref{gir} we have to check 
$\int_0^T \int_{\mathbb{R}} \left( e^{\theta (x,s)/2} -1  \right)^2 \nu (dx)ds < \infty$. We know from \cite{sato}, Remark 33.3, that this condition is equivalent to the following three properties combined
\begin{enumerate}
 \item $ \int_{\{(x,s) : \theta (x,s) <-1\}} \nu (dx)ds <\infty$, 
\item $\int_{\{(x,s) : \theta (x,s) >1\}} e^{\theta (x,s)} \nu (dx)ds <\infty$, and
\item $\int_{\{(x,s) : |\theta (x,s)| \leq 1\}}  \theta^2 (x,s) \nu (dx)ds <\infty$.
\end{enumerate}
Since $f_n' \geq 0$, thus $\theta \geq 0$;  it is left to prove 2. and 3.

\textit{Case 2.}: Since $\theta > 1$ and $A$ bounded away from zero, we have
\begin{align*}
\int_{\{(x,s) : \theta (x,s) >1\}} e^{\theta (x,s)} \nu (dx)ds \leq \int_1^T \int_{A}( 1+ \frac{f_n'(s) |x|}{m} )\nu(dx)ds < \infty.
\end{align*}

\textit{Case 3.}: Since $\ln (1+z) \leq z$, for all $z > -1$, and inequality (\ref{incr2}) we get
\begin{align*}
 \int_{\{(x,s) :| \theta (x,s)| \leq 1\}} (\theta (x,s))^2 \nu (dx)ds &\leq \frac{1}{m^2} \int_1^T \int_A  (f_n' (s))^2 x^2 \nu(dx)ds =  \frac{1}{m} ||f_n'||^2_{L_2 [1,T]} < \infty.
\end{align*}
Hence, due to Theorem \ref{gir} $\PRO_{\tilde{X}_n} |_{\FF_T}$ and $\PRO_{Y_n} |_{\FF_T}$ are absolutely continuous.

Next, we show inequality (\ref{mass2}).

Note that $\theta (x,s) = 0$, for $s \in [0,\ln T)$ and all $x \in \mathbb{R}$. Because of Theorem \ref{gir} and the density transformation formula (\ref{dens2}) we obtain that
\begin{align}\label{lala}
 \PRO & (\tilde{X}_n (t) \leq 1, \text{ } \ln T \leq t \leq T) = \ER_{\tilde{X}_n} ( \mathbf{1}_{\{\tilde{X}_n(t) \leq 1, \text{ } \ln T \leq  t \leq T\}}) \notag \\
 &= \ER_{Y_n} \left( \mathbf{1}_{\{Y_n (t) \leq 1, \text{ } \ln T \leq  t \leq T\}} e^{ - \int_{\ln T}^T \int_{\mathbb{R}}  \theta (x,s)  \bar{N}_{Y_n} (dx,ds)} \right)  \cdot e^{ \int_{\ln T}^T \int_{\mathbb{R}}  \left( e^{\theta(x,s)} -1- \theta(x,s) e^{\theta(x,s)} \right) \nu(dx)ds }\notag \\
&= \ER_{Y_n} \left( \mathbf{1}_{\{Y_n (t) \leq 1, \text{ } \ln T \leq  t \leq T\}} e^{ - \int_{\ln T}^T \int_{\mathbb{R}}  \theta (x,s)  \bar{N}_{Y_n} (dx,ds)} \right) \cdot e^{- \int_{\ln T}^T \int_{\mathbb{R} }  g \left(\tfrac{f_n'(s) |x|}{m} \mathbf{1}_{x\in A } \right) \nu (dx)ds },
\end{align}
where $g(u) := (1+u ) \ln (1+u) - u$, $u>0$. For $u\geq 0$ bounded away from infinity, we have with a constant $\tilde{c}_1>0$, $g(u) \leq \tilde{c}_1 u^2$ because of Taylor's expansion. Hence, since $f'_n$ and $A$ is bounded away from $-\infty$, we get
\begin{align*}
 & e^{- \int_{\ln T}^T \int_{\mathbb{R}}  g \left(\tfrac{f'_n (s)|x|}{m} \mathbf{1}_{x\in A } \right) \nu (dx)ds } \geq e^{- \tilde{c}_1 \int_{\ln T}^T \int_{\mathbb{R}}   \tfrac{f'_n (s)^2 x^2}{m^2} \mathbf{1}_{x\in A } \nu (dx)ds } \\
 &= e^{- \tilde{c}_1 \int_{\ln T}^T f'_n (s)^2 ds \cdot  \int_{A}   \tfrac{ x^2}{m^2} \nu (dx) } = e^{- \frac{\tilde{c}_1}{m} ||f'_n ||^2_{L_2[\ln T,T]}} \geq e^{- \frac{\tilde{c}_1}{m} ||f' ||^2_{L_2[1,\infty)}},
\end{align*}
having used (\ref{incr2}).
Let $p>1$. Using the last estimate and the reverse H\"older inequality in (\ref{lala}) yields that
\begin{align}\label{dens1}
 \PRO  &(\tilde{X}_n (t) \leq 1,  \text{ } \ln T \leq t \leq T) \notag \\
& \geq \exp \left(- \frac{\tilde{c}_1}{m} ||f'_n||^2_{L_2[1,\infty)}\right) (\PRO (Y_n (t) \leq 1,\text{ } \ln T \leq t\leq T))^p \notag \\
& \quad \cdot \left( \ER_{Y_n} \left(e^{\frac{1}{p-1} \int_{\ln T}^T \int_{\mathbb{R} }  \theta (x,s) \bar{N}_{Y_n} (dx,ds)  }  \right)\right)^{-(p-1)} .
\end{align}
Furthermore, we have due to the density transform formula (\ref{dens})
 \begin{align*}
 & \left(  \ER_{Y_n} \left(e^{\frac{1}{p-1} \int_{\ln T}^T \int_{\mathbb{R}}  \theta (x,s) \bar{N}_{Y_n} (dx,ds)  }  \right)\right)^{-(p-1)} \\
&  = \left( \ER_{\tilde{X}_n} \left(e^{ \int_{\ln T}^T \int_{\mathbb{R} } \frac{1}{p-1} \theta (x,s) ( N (dx,ds) - \Lambda_{Y_n} (dx,ds)  ) +  \theta (x,s) ( N (dx,ds) - \nu (dx)ds ) }  \right)\right)^{-(p-1)}\\
 & \quad \cdot    \left(e^{ - \int_1^T \int_{\mathbb{R} }  (e^{\theta (x,s)} - 1- \theta(x,s))  \nu (dx)ds }  \right)^{-(p-1)} \\
&=  \left( \ER_{\tilde{X}_n} \left(e^{ \int_{\ln T}^T \int_{\mathbb{R} } (\frac{1}{p-1}+1)  \theta (x,s) ( N (dx,ds) - \nu (dx)ds) }  \right)\right)^{-(p-1)}\\
 & \quad \cdot   \left( \exp{ \left( \int_{\ln T}^T \int_{\mathbb{R} }  (\frac{\theta(x,s)}{p-1}- \frac{\theta(x,s)}{p-1} e^{\theta(x,s)}-e^{\theta (x,s)} + 1+ \theta(x,s))  \nu (dx)ds \right) }  \right)^{-(p-1)} \\
&=   \left(\exp{ \left(  \int_{\ln T}^T \int_{\mathbb{R} }  (e^{(\tfrac{1}{p-1} +1)\theta (x,s)} -1 - (\tfrac{1}{p-1} +1) \theta(x,s))  \nu (dx)ds \right) }  \right)^{-(p-1)} \\
 & \quad \cdot    \left(\exp{ \left(  \int_{\ln T}^T \int_{\mathbb{R} }  (\frac{\theta(x,s)}{p-1}- \frac{\theta(x,s)}{p-1} e^{\theta(x,s)}-e^{\theta (x,s)} + 1+ \theta(x,s))  \nu (dx)ds \right) }  \right)^{-(p-1)} \\
&= \exp{\left( (p-1)  \int_{\ln T}^T \int_{\mathbb{R} } e^{\theta(x,s)} (-e^{\tfrac{1}{p-1}\theta (x,s)} +1 + \tfrac{1}{p-1} \theta(x,s))  \nu (dx)ds \right) }  ,
 \end{align*}
where we used  in the third step a modification of Lemma 33.6 of \cite{sato}. The difference between \cite{sato} and our case consists in the consideration of time-inhomogeneous processes in contrast to time-homogeneous processes used in \cite{sato}. More precisely, we apply this Lemma to the following process
\begin{align*}
 \int_{\ln T}^T \int_{\mathbb{R} } & (\frac{1}{p-1}+1)  \theta (x,s) ( N (dx,ds) - \nu (dx)ds) \\
&-\int_{\ln T}^T \int_{\mathbb{R} }  (e^{(\tfrac{1}{p-1} +1)\theta (x,s)} -1 - (\tfrac{1}{p-1} +1) \theta(x,s))  \nu (dx)ds,
\end{align*}
and use the properties of the Girsanov transform for additive processes (Theorem \ref{gir}) instead for L\'evy processes.  Next, define $w(x) := 1+x-e^x$, for all $x \geq 0$. Assume for a moment that $p>1$ is chosen such that 
\begin{align}\label{bound}
 \tfrac{1}{p-1} \theta(x,s), \quad  \text{ for all }x\in \mathbb{R} \text{ and } s \in [\ln T, T],
\end{align}
is almost everywhere bounded away from infinity. This boundedness is independent of $T$ and $n$.
 Then, there is a constant $\tilde{c}_2 >0$ such that $w(\tfrac{1}{p-1} \theta(x,s)) \geq - \tilde{c}_2(\tfrac{1}{p-1} \theta(x,s))^2$ and hence, 
\begin{align*}
 & (p-1)  \int_{\ln T}^T \int_{\mathbb{R} } e^{\theta(x,s)} (-e^{\tfrac{1}{p-1}\theta (x,s)} +1 + \tfrac{1}{p-1} \theta(x,s)) \nu (dx) ds   \\
& \geq - \frac{\tilde{c}_2}{(p-1)}  \int_{\ln T}^T  \int_{\mathbb{R}} (\theta (x,s))^2 \nu(dx) ds \geq - \frac{\tilde{c}_2}{(p-1)m^2}  \int_{\ln T}^T  (f'_n (s))^2 ds \cdot \int_{A} x^2  \nu (dx)  \\
&\geq - \frac{\tilde{c}_2}{(p-1)m} || f'||^2_{L_2 [1,\infty)},
\end{align*}
where we used in the last step again inequality (\ref{incr2}).
Putting this into (\ref{dens1}) implies
\begin{align*}
 \PRO & (\tilde{X}_n (t) \leq 1, \text{ } \ln T \leq  t \leq T) \\
& \geq \PRO  (Y_n (t) \leq 1, \text{ } \ln T \leq t \leq T)^{p}\cdot  \exp \left(  (-\frac{\tilde{c}_1}{m}  -  \frac{\tilde{c}_2}{(p-1)m} ) ||f'||^2_{L_2 [1,\infty)} \right).
\end{align*}
Optimizing in $p$ shows that the best choice is 
\begin{align*}
 p := 1 + \sqrt{ \frac{\tilde{c}_2 ||f'||^2_{L_2 [1,\infty)}}{ 2m \ln (1/  \PRO_{Y_n}  (Y_n (t) \leq 1, \text{ }\ln T \leq t \leq T))} } >1.
\end{align*}
Using this and choosing $c_1, c_2$ suitably completes the proof of inequality (\ref{mass2}).

It is left in (\ref{bound}) to show that $\tfrac{1}{p-1}\theta (x,s)$ is almost everywhere bounded away from infinity. More precisely, we will prove $ \tfrac{1}{p-1} f'_n(s)\leq c$  a.e., for $s \in [\ln T,T]$, which follows from  
\begin{align}\label{aux}
   \PRO_{Y_n}  (Y_n (t) \leq 1, \text{ } \ln T \leq  t \leq T) \geq T^{-d} \quad \text{for some } d>0, 
\end{align}
for any $n \in \mathbb{N}$. Indeed, if (\ref{aux}) holds then due to the choice of $p$  we obtain
\begin{align*}
 \frac{1}{p-1} &= \sqrt{\frac{2m \ln (1/  \PRO_{Y_n}  (Y_n (t) \leq 1, \text{ }\ln T \leq t \leq T)) }{\tilde{c}_2 ||f'||^2_{L_2 [1,\infty)}}}  \leq  \sqrt{\frac{2m \ln (T^{-d}) }{\tilde{c}_2 ||f'||^2_{L_2 [1,\infty)}}}  \leq \tilde{c}\cdot  \sqrt{\ln T}.
\end{align*}
Combining this with $f_n'(s) (\ln T)^{1/2} \leq  f'(s) (\ln T)^{1/2} \leq c$  a.e., for $ s \in [\ln T, T]$ (see (\ref{prop2})) we get $ \tfrac{1}{p-1} f'_n(s)\leq   \tilde{c} (\ln T)^{1/2} f'(s) \leq c$  a.e. 
The proof of (\ref{aux}) can be found in the next step. 


\paragraph*{Step 2: Proof of (\ref{aux})}
For this purpose,  we represent the process as a sum of independent processes $Y_n (\cdot) \overset{d}{=} X (\cdot) + S_n (\cdot) + f_n (\ln T)$, where $X$ is the original L\'evy process with triplet $(\sigma^2 ,b,\nu(dx))$, $S_n$ is an additive process with triplet $(0,0,\frac{ f_n'(s) |x|}{m} \mathbf{1}_{\{ x \in A\}} \nu(dx) ds)$. Again, by homogenization there exists a L\'evy process $\tilde{S}$ with triplet $(0,0, \tfrac{|x|}{m} \mathbf{1}_{\{ x \in A\}} \nu(dx))$ such that $S_n (\cdot) = \tilde{S} (f_n (\cdot) -f_n(\ln T))$ f.d.d.  Note that $\tilde{S}$ is a martingale with some finite exponential moment since $A$ is bounded away from minus infinity. 

Since $f_n (\ln T) \leq \kappa \ln T $ then according to Lemma \ref{const1c} we have, for $T$ sufficiently large,
\begin{align*}
 \PRO (X  (t)  \leq  -f_n (\ln T), \text{ } \ln T \leq t \leq T) \geq  9 T^{-\kappa \ln 3} \cdot  \PRO (X  (t)  \leq 1, \text{ } \ln T \leq t \leq 4 T^{1 + \ln 2}).
\end{align*}
 Combining this with the indendence of $X$ and $\tilde{S}$ yields
\begin{align*}
 \PRO & \Bigl(Y_n (t) \leq 1, \text{ } \ln T \leq  t \leq T \Bigr) \\
&= \PRO \Bigl(X  (t) + \tilde{S} (f_n (t)-f_n(\ln T)) + f_n (\ln T) \leq 1, \text{ } \ln T \leq t \leq T \Bigr) \\
& \geq \PRO \Bigl(X  (t)  \leq  -f_n (\ln T), \text{ } \ln T \leq t \leq T \Bigr) \cdot \PRO \Bigl( \tilde{S} (f_n (t)-f_n(\ln T)) \leq 1, \text{ } \ln T \leq t \leq T \Bigr) \\
& \geq 9 T^{-\kappa \ln 3} \cdot \PRO \Bigl(X  (t)  \leq 1 , \text{ } \ln T \leq t \leq 4 T^{1+\ln 2}\Bigr)  \cdot \PRO \Bigl( \tilde{S} (t) \leq 1, \text{ } 0 \leq t \leq f_n (T) -f_n ( \ln T) \Bigr) \\
& \geq  9 T^{-\kappa \ln 3} \cdot \PRO \Bigl(X  (t)  \leq 1 , \text{ } 1 \leq t \leq T^{2} \Bigr)  \cdot \PRO \Bigl( \tilde{S} (t) \leq 1, \text{ } 0 \leq t \leq \kappa T\Bigr) \\
&\geq T^{-2 \delta -1/2 - \kappa \ln 3 +o(1)},
\end{align*}
where we used in the last step the fact that the survival exponent of a L\'evy martingale with finite variance is equal to $1/2$ (see \cite{feller}, Chapter  XII).

\paragraph*{Step 3: Proof of (\ref{lowerboundmass2}) } 
Having deduced  (\ref{mass2}) we will prove the following lower bound, for any $n \in \mathbb{N}$,
\begin{align}\label{lowerboundmass2}
 \PRO  (Y_n (t) \leq 1, \text{ } \ln T \leq t \leq T) \geq g_{n+1} (T) \cdot \beta,
\end{align}
where $\beta>0$ is a constant specified later.

We represent the process $Y_n$ as a sum of independent processes $Y_n (\cdot) \overset{d}{=} X (\cdot) + Z_n(\cdot) + f_n (\ln T)$, where $Z_n$ is an additive process with triplet \\
$(0,0, \tfrac{f'_n (s) |x|}{m} \mathbf{1}_{\{ x \in A\}} \nu(dx) ds)$. 
Due to the L\'evy-Khintchine formula and
\begin{align*}
 f_n (t) -f_n (\ln T) = \int^t_{\ln T} f'_n (s)ds = \int^t_{0} f'_n (s)ds,
\end{align*}
there exists a L\'evy process $\tilde{Z}$ with triplet $(0,0,\tfrac{ |x|}{m} \mathbf{1}_{\{ x \in A\}} \nu(dx))$ such that \\ $Z_n (\cdot) = \tilde{Z} (f_n (\cdot)-f_n (\ln T))$ in f.d.d.  Note that $\tilde{Z}$ is a L\'evy martingale with some finite exponential moment, since $A$ is compact in $(-\infty,0)$ and the characteristic exponent of $\tilde{Z}$ has the following representation 
\begin{align*}
 \Psi (u) 
&= \int_{\mathbb{R} } (1-e^{iux} +  iux)  \tfrac{|x|}{m} \mathbf{1}_{\{ x \in A\}}  \nu (dx)
\end{align*}
and  L\'evy measure satisfying $\int ( |x| \wedge x^2 )  \tfrac{|x|}{m} \mathbf{1}_{\{ x \in A\}}  \nu(dx)< \infty$. Thus,
\begin{align*}
  \PRO & (Y_n (t) \leq 1, \text{ } \ln T \leq t \leq T) =\PRO   (X (t) + Z_n (t) \leq 1 - f_n (\ln T), \text{ }\ln T \leq t \leq T) \\
&\quad = \PRO (X (t) + \tilde{Z}(f_n (t) - f_n (\ln  T)) \leq 1 - f_n (\ln T), \text{ }\ln T \leq t \leq T).
\end{align*}

Recall that there exists $\kappa >0$ such that $f(T) \leq \kappa T$, for $T$ sufficiently large (see (\ref{prop1})). Using the independence of $X$ and $\tilde{Z}$  we can write, for $T$ sufficiently large,
\begin{align}\label{prod}
& \pr{}{X (t) + \tilde{Z} (f_n (t)-f_n (\ln T)) \leq 1 - f_n(\ln T), \text{ }  \ln T \leq t \leq T} \notag \\
&\geq \pr{}{ X (t) \leq 1 -  \max\{ 1, (f_n (t)-f_n (\ln T))^{2/3}\} -f_n (\ln T), \text{ }  \ln T \leq t \leq T}\notag \\
&\quad \cdot  \pr{}{ \tilde{Z} (f_n (t)-f_n (\ln T)) \leq \max\{ 1, (f_n (t)-f_n (\ln T))^{2/3}\}  ,\text{ }  \ln T \leq t \leq T} \notag \\
&\geq   \pr{}{ X (t) \leq 1 -  f_{n+1} (t) , \text{ }  \ln T \leq t \leq T} \cdot \pr{}{ \tilde{Z} (t) \leq  \max\{1, t^{2/3} \} , \text{ }  0 \leq t \leq \kappa T}  \notag \\
&=  g_{n+1} (T) \cdot \pr{}{ \tilde{Z} (t) \leq  \max\{1, t^{2/3} \} , \text{ }  0 \leq t \leq \kappa T},
\end{align}
where we used in the second step that $f_n (T)-f_n (\ln T) \leq f (T) \leq \kappa T$, for $T$ sufficiently large (see \ref{findu}).
Since $\tilde{Z}$ is a martingale with some exponential moment and using  Lemma \ref{associated} and \ref{coup} implies, for $0<\beta<1$ suitably chosen and $\beta = \beta (\tilde{Z})$, 
\begin{align}\label{bteil2}
 & \pr{}{ \tilde{Z} (t)  \leq  \max\{1, t^{2/3} \} , \text{ } 0 \leq t \leq \kappa T} \notag \\
&\geq \pr{}{ \tilde{Z} (t)  \leq  1, \text{ } 0 \leq t \leq 1}\pr{}{ \tilde{Z} (t)  \leq  \max\{1, t^{2/3} \} , \text{ } 1 \leq t \leq \kappa T} \gtrsim  \beta,
\end{align} 
where $\pr{}{ \tilde{Z} (t)  \leq  1, \text{ } 0 \leq t \leq 1} >0$ is constant depending on $\tilde{Z}$.  Combining (\ref{bteil2}) with (\ref{prod}) shows (\ref{lowerboundmass2}).

\paragraph*{Step 4: Proof of (\ref{iteration})}
Plugging (\ref{lowerboundmass2}) into (\ref{mass2}) and using that $H$ is monotone on $(0,1]$ we obtain, for any $n \in \mathbb{N}$, that
\begin{align}\label{iteration}
g_{n} (T)& \geq \beta \cdot g_{n+1} (T)  \cdot \exp \left( -\sqrt{c_1 ||f'||^2_{L_2 [1,\infty)} \ln (1 /( g_{n+1} (T) \cdot \beta) )} - c_2 ||f'||^2_{L_2 [1,\infty)} \right)    \notag \\
& = H(g_{n+1} (T) \cdot \beta),
\end{align}
which provides the iteration rule.

\paragraph*{Step 5: Proof  of (\ref{endpoint})}
The aim of this step is to find a number $n(T)$ depending on $T$ such that 
\begin{align}\label{endpoint}
 g_{n(T)} (T) \geq T^{-\delta +o(1)} \cdot G(\ln T).
\end{align}
This inequality presents our end point of the iteration.

Our first goal  of this step is to set the number of iteration steps, depending on $T$, such that eventually the boundary is larger than $-1 - f (\ln T) - n(T)$. Recall that  $f(T) \leq \kappa T$. We choose, for $T$ sufficiently large,
\begin{align*}
 n(T) := \left\lceil  \frac{\ln (\ln (\kappa T)/\ln(2))}{\ln(3/2)} \right\rceil,
\end{align*}
and thus, for $T$ sufficiently large,
\begin{align} \label{absch}
g_{n(T)} (T)
& \geq  \PRO (X(t) \leq -1 - f (\ln T) - n(T), \text{  } \ln T \leq t \leq T),
\end{align}
since $f$ is non-decreasing and inequality (\ref{findu}) holds.


Next, we show (\ref{endpoint}) to obtain the asymptotic rate of the end point. Recall that $f'(t) \searrow 0$, for $t\rightarrow \infty$, and $n(T) \leq b_1 (\ln (\ln T))$, for $b_1 >0$ suitably chosen. Define $k(T):=2+f'(1) + b_1 \ln (\ln T)$. 
Since $(X (t))_{t\geq0}$ has stationary and independent increments we have due to (\ref{absch})
\begin{align}
&g_{n(T)} (T) \geq \PRO \Bigl( X(t) \leq -1 - f (\ln T) -n(T), \text{  } \ln T \leq t \leq T  \Bigr) \notag \\
&\geq \PRO \Bigl( \{X(t) \leq -1  - f (\ln T) - n(T), \text{  } \ln T \leq t \leq T\} \cap \{X(\ln T-1) \leq 1 -f (\ln T-1)\} \Bigr)  \notag \\
&\geq \PRO \Bigl( \{X(t) -X(\ln T-1) \leq -2 - f (\ln T) - f (\ln T-1) -n(T) , \text{  } \ln T \leq t \leq T \}  \notag \\
&\quad  \cap \{ X(\ln T-1) \leq 1 -f (\ln T-1)\} \Bigr)  \notag \\
&\geq \PRO \Bigl( \{X(t) -X(\ln T-1) \leq -k(T) ,  \ln T \leq t \leq T \} \cap \{ X(\ln T-1) \leq 1 -f (\ln T-1)\} \Bigr)  \notag \\
&\geq \PRO \Bigl( X(t) \leq -k(T)  , \text{  } 1\leq t \leq T - \ln T +1  \Bigr)  \notag \\
&\quad \cdot \PRO \Bigl( X(t) \leq 1 -f(t), \text{  } \ln (\ln T ) \leq t \leq \ln T -1 \Bigr) \notag \\
&\geq  3^{-k(T)-2} \cdot \PRO \Bigl( X(t) \leq 1  , \text{  } 1 \leq t \leq (T - \ln T +1) \cdot 2^{k(T)+2}  \Bigr) \notag \\
&\quad \cdot \PRO \Bigl( X(t) \leq 1 -f(t), \text{  } \ln (\ln T ) \leq t \leq \ln T \Bigr) \notag \\
&=   T^{-\delta + o(1)} \cdot G (\ln T), \notag
\end{align}
where the second last step follows analogously to  Lemma \ref{const1c} in spite of the negative boundary since $\nu (\mathbb{R}_-)>0$ and the considered time interval of the one-sided exit problem does not contain zero. In the last step we used assumption (\ref{assum}). Hence, we have (\ref{endpoint}).

\paragraph*{Step 6: Proof of (\ref{subB})}
In this step we combine inequality (\ref{iteration}) with (\ref{endpoint}) to obtain finally inequality (\ref{subB}). 

Since $H'>0$ on $(0,1]$, inequality (\ref{iteration}) implies  $g_0 (T) \geq H^{n(T)}_\beta (g_{n(T)} (T) )$. Our first goal is to calculate  $H^{n(T)}_\beta (g_{n(T)} (T) ) $ with the help of (\ref{endpoint}).
We start showing by induction that
\begin{align}\label{indi}
 &H^n_\beta (x) \geq  W_{n} (x) \cdot  \exp \left( -  n \sqrt{c_1    ||f'||^2_{L_2 [1,\infty)} \ln \left(  W_{n} (x)^{-1} \cdot Z_{n} (x) \right)}  \right),
\end{align}
for all $n \geq 1$ and $x \in (0,1]$, where 
\begin{align*}
 W_{n} (x) : = x   \cdot \beta^n  \cdot \exp \left(  - n \cdot c_2 ||f'||^2_{L_2[1,\infty)}   \right) ,
\end{align*}
and
\begin{align*}
 Z_{n} (x):=   \exp \left( (n-1)  \sqrt{c_1    ||f'||^2_{L_2 [1,\infty)} 2^{n-2} \ln \left( x^{-1} \beta^{-2}  \right) } - c_2 ||f'||^2_{L_2[1,\infty)}   \right).
\end{align*}
Indeed, we have, for $n=1$, that
\begin{align*}
 H^1_\beta (x) &= H (x \cdot \beta )  = W_{1} (x)  \cdot \exp \left( -   \sqrt{c_1    ||f'||^2_{L_2 [1,\infty)} \ln \left( \left( W_{1} (x)\right)^{-1} Z_{1}(x)  \right)} \right).
\end{align*}
Assume now that (\ref{indi}) holds, for $n-1$. Note that, for $x$ sufficiently small, we have
\begin{align*}
H(x) \geq x^2.
\end{align*}
First, we get
\begin{align*}
 W_{n-1} &\Bigl( H(x \cdot \beta ) \Bigr)  = W_{n} (x) \cdot \exp \left(  -\sqrt{c_1 ||f'||^2_{L_2 [1,\infty)} \ln \left(  x^{-1} \beta^{-1}   \right)  }   \right).
\end{align*}
Hence, we obtain,  for $x \in (0,1]$, that
\begin{align*}
& W_{n-1}  \bigl( H(x \cdot \beta ) \bigr)^{-1} \cdot Z_{n-1} \left(H(x \cdot \beta) \right) \\
& \leq  \frac{1}{ W_{n} (x)}  \cdot \exp \left(  \sqrt{c_1 ||f'||^2_{L_2 [1,\infty)} \ln \left(  x^{-1} \beta^{-1}  \right)  } + (n-2) \sqrt{c_1 ||f'||^2_{L_2 [1,\infty)} 2^{n-3} \ln \left(  x^{-2} \beta^{-4}  \right)  }   \right) \\
&\leq \bigl( W_{n} (x)\bigr) ^{-1}  \cdot Z_{n} (x),
\end{align*}
since $\beta \leq 1$. This implies, for $x$ sufficiently small,
\begin{align*}
 H^n_\beta (x) &= H^{n-1}_\beta \Bigl( H(x \cdot \beta) \Bigr) \\
&\geq W_{n-1} \Bigl( H(x \cdot \beta) \Bigr)\\
&\quad \cdot  \exp \left(- (n-1) \sqrt{  c_1    ||f'||^2_{L_2 [1,\infty)} \ln \left(   W_{n-1}  \Bigl( H(x \cdot \beta) \Bigr)^{-1}  Z_{n-1} \Bigl( H(x \cdot \beta ) \Bigr) \right) }    \right) \\
 & \geq W_{n} (x) \cdot \exp \left(  -\sqrt{c_1 ||f'||^2_{L_2 [1,\infty)} \ln \left(  x^{-1} \beta^{-1}  \right)  }   \right)\\
& \quad \cdot \exp \left(- (n-1) \sqrt{  c_1    ||f'||^2_{L_2 [1,\infty)} \ln \left(  \Bigl(  W_{n} (x) \Bigr) ^{-1}  Z_{n} (x)\right) }    \right) \\
& \geq  W_{n} (x) \cdot \exp \left(- n \sqrt{  c_1    ||f'||^2_{L_2 [1,\infty)} \ln \left(  \Bigl( W_{n} (x) \Bigr)^{-1}  Z_{n} (x)   \right)}    \right),
\end{align*}
where we used in the last step that, for $n\geq 2$,
\begin{align*}
&\Bigl( W_{n} (x) \Bigr)^{-1}  Z_{n} (x) \\
&= x^{-1} \beta^{-n} \cdot \exp \left(  (n-1) c_2 ||f'||^2_{L_2 [1,\infty)} \right) \cdot  \exp \left( (n-1)  \sqrt{c_1    ||f'||^2_{L_2 [1,\infty)} 2^{n-2} \ln \left( x^{-1} \beta^{-2}  \right) } \right) \\
& \geq x^{-1} \beta^{-1}.
\end{align*}
Recall that $n(T) \leq b_1 \bigl( \ln (\ln T) \bigr)$ and $g_{n(T)} (T) \leq T^{-\delta +o(1)}$, for $b_1 = 5/2$. Then, we obtain that
\begin{align}\label{Z1}
 Z_{n(T) } (g_{n(T)} (T) )  &\leq  \exp \left(  b_1 \Bigl( \ln (\ln T) \Bigr)  \sqrt{c_1    ||f'||^2_{L_2 [1,\infty)} \delta \cdot 2^{-2} (\ln T)^{b_1 \ln 2 } \ln \left( T \beta^{-2}  \right) } \right) \notag \\
&\leq \exp \left(  b_1( \ln \ln T) \cdot (\ln T)^{7/5}   \right)  \leq   \exp \left(   (\ln T)^{3/2}  \right)   \notag \\
&= T^{\sqrt{\ln T}},
\end{align}
for $T$ sufficiently large, and
\begin{align}\label{Z2}
 \exp \left(  -  n(T) \cdot c_2 ||f'||^2_{L_2[1,\infty)}   \right)  = T^{o(1)}  \geq   \exp \Bigl( - \tfrac{\delta}{2} \cdot (\ln T)    \Bigr)  \geq g_{n(T)} (T) .
\end{align}
Putting (\ref{Z1}) and (\ref{Z2}) into (\ref{indi}) we obtain, for $b_2 >0 $ suitably chosen, that
\begin{align*}
  H^{n(T)}_\beta (g_{n(T)} (T) ) &\geq g_{n(T)} (T) \cdot \beta^{n(T)}   \cdot \exp \left(  - n(T) \cdot c_2 ||f'||^2_{L_2[1,\infty)}   \right)  \\
& \quad  \cdot  \exp \left( -  n(T) \sqrt{c_1    ||f'||^2_{L_2 [1,\infty)} \ln \left( g_{n(T)} (T)^{-2} \beta^{-n(T)}T^{\sqrt{\ln T}}  \right)}  \right)  \\
&\geq g_{n(T)} (T) \cdot \beta^{n(T)}   \cdot \exp \left(  - n(T) \cdot c_2 ||f'||^2_{L_2[1,\infty)}   \right)  \\
& \quad  \cdot  \exp \left( -   3 \cdot n(T)\cdot (\ln T)^{3/4}  \sqrt{c_1    ||f'||^2_{L_2 [1,\infty)} }  \right)  \\
&\geq g_{n(T)} (T)   \cdot  \exp \left(- b_2  (\ln T)^{4/5}     \right)  \\
&\geq g_{n(T)} (T)  \cdot  T^{ o(1)} .
\end{align*}
Combining this with (\ref{endpoint}) and an $n(T)$-times iteration of (\ref{iteration}) yields
\begin{align*}
  g_0 (T) &= \PRO (X(t) \leq 1 - f(t), \text{  } \ln T \leq t \leq T)  \geq H_\beta^{n(T)} (g_{n(T)} (T)) = T^{-\delta + o(1)} \cdot G(\ln T), 
\end{align*}
which completes the proof of (\ref{subB}) provided (\ref{aux}) holds.


\section{Proof of Theorem \ref{mblp} (positive boundaries)}\label{proof1}

Since $f$ is positive, our quantity is trivially bounded from below as follows
\begin{align*}
  \PRO (X(t) \leq 1+ f(t), \text{ } 0\leq t \leq T) \geq \PRO (X(t) \leq 1, \text{ } 0\leq t \leq T) = T^{-\delta + o(1)}.
\end{align*}

Our goal is to show 
\begin{align}\label{goalpos}
  \PRO (X(t) \leq 1+ f(t), \text{ } 0\leq t \leq T) \leq T^{-\delta + o(1)}.
\end{align}

\subsection{Preliminaries}\label{Pre}

In the following proof we can assume that $T>1$. We can write
\begin{align*}
 \PRO (X(t) \leq 1+ f(t), \text{ } 0\leq t \leq T) \leq \PRO (X(t) \leq 1+ f(t), \text{ } 1 \leq t \leq T). 
\end{align*}
Hence, as from now we consider the time interval $[1,T]$.\\
\textbf{Auxiliary function $H$ for the iteration:} We define 
\begin{align*}
 H(x) := x \exp \left(\sqrt{c_1 ||f'||^2_{L_2 [1,\infty)}  \ln(1/x) }   \right), \quad  x \in (0,1].
\end{align*}
Note that  $H'(x)>0$ on $(0,1)$.
Furthermore, define $H^{0}_2 (x) :=H(2x)$ and, for $i\geq 1$,
\begin{align*}
 H^{i}_2 (x) := H \bigl(2 H^{i-1}_2 (x) \bigr).
\end{align*}
$H^i_2$ is well defined since $H(x) \in (0,1]$ for  $x \in (0,1]$.\\
\textbf{Auxiliary function $f_n$ for the iteration:} Define
$f_0 (t) := \max\{f(\ln T),f(t)\}$ and, for $n\geq 1$, $f_n (t) := f (\ln T) + n \kappa_{\delta} \ln T + n (\ln T)^5$, for $t\leq \ln T$, and, for $t > \ln T$, 
\begin{align*}
   f_n (t) :=  f_{n-1} (\ln T) + n \kappa_{\delta} \ln T + \max \left\{ (\ln T)^5, \left( f_{n-1} (t) - f_{n-1} (\ln T) \right)^{3/4} \right\},
\end{align*}
where $\kappa_{\delta} >0$ is constant specified later.
By induction it follows, for $t > \ln T$ and $n\geq 0$, that
\begin{align}\label{abs}
 f_n (t) \leq f(\ln T) + n \kappa_{\delta} \ln T + (n-1) (\ln T)^5 + \max \left\{ (\ln T)^5, f(t)^{(3/4)^n} \right\}.
\end{align}
Furthermore, define $\tilde{t}_{T,n} := \inf \{ t \geq 0: (\ln T)^5 <  \left( f_{n-1} (t) - f_{n-1} (\ln T) \right)^{3/4} \}$. Note that, for $n\geq 1$,
\begin{align*}
f_n'(t) = 
 \begin{cases}
0 ,& t<  \tilde{t}_{T,n}, \\
 \frac{3}{4} \left( f_{n-1} (t) - f_{n-1} (\ln T) \right)^{-1/4}  f_{n-1}' (t), &     t > \tilde{t}_{T,n}  . 
\end{cases}
\end{align*}
Since $\left( f_{n-1} (t) - f_{n-1} (\ln T) \right)^{3/4} > (\ln T)^5$ we get again by induction
\begin{align}\label{incr}
  f_n' (t) \leq f' (t) \quad \text{ a.e.}
\end{align}
Note that $\tilde{t}_{T,n}$ is non-decreasing in $n$. Without loss of generality we can assume that $\tilde{t}_{T,n}\geq 1$, for all $n>0$ and $T$ sufficiently large. Otherwise, we choose $T$ sufficiently large such that $ \left( f_{n-1} (1) - f_{n-1} (\ln T) \right)^{3/4} < (\ln T)^5$ and thus,  $\tilde{t}_{T,n}\geq 1$.

\subsection{Iteration; Proof of (\ref{goalpos})}

First, define
\begin{align*}
g_n (T) :=  \PRO (X(t) \leq 1 + f_n (t), \text{  } 1 \leq t \leq T). 
\end{align*}


\paragraph*{Step 1: Proof of (\ref{mass})}
By using a change of measure the aim of this step is to show the following inequality:
\begin{align}\label{mass}
g_n(T)
&\leq \PRO (Y_n (t) \leq 1, \text{  } 1 \leq t \leq T) \notag \\
& \quad \cdot \exp \left( \sqrt{c_1 ||f '||^2_{L_2 [1,\infty)} \ln (1/ \PRO (Y_n(t) \leq 1 , \text{  } 1 \leq t \leq T) )} \right)\notag \\
&= H \Bigl( \PRO (Y_n (t) \leq 1 , \text{  } 1 \leq t \leq T)  \Bigr),
\end{align}
where $c_1>0$ is a constant depending on $\nu$ and $f$ that is chosen later on.

In the same way as previously, we can assume that $\nu ((0,1]) >0$.

Since $\nu((0,1])>0$, we can choose a compact set $A \subseteq (0,1]$  such that 
\begin{align*}
 0< \int_A x^2 \nu(dx)=: m < \infty.
\end{align*}

 Let $\tilde{X}_n$ and $Y_n$ be two additive processes with triplets $(\sigma^2 , f_{\tilde{X}_n} (t), \nu(dx)ds )$ and  \\
$(\sigma^2 , f_{Y_n} (t) , (1+ \frac{f_n'(s) x}{m} \mathbf{1}_{\{x \in A \}}) \nu(dx)ds )$ respectively, where $f_{Y_n} (t) := b\cdot t-f_n(1)$ and $f_{\tilde{X}_n} (t) := b\cdot t -f_n (t)$.

The same arguments as previously implies that $\PRO_{\tilde{X}_n} |_{\FF_T}$ and $\PRO_{Y_n} |_{\FF_T}$ are absolutely continuous with  $\frac{d \Lambda_{Y_n} (x,s)}{\nu(dx)ds }  = e^{\theta (x,s)}$, where $\theta(x,s) := \ln (1 + \tfrac{ f_n'(s)x}{m} \mathbf{1}_{\{x \in A\}})$, for all $ s \in [0,T]$ and $x \in \mathbb{R}$, and $\Lambda_{Y_n} (dx,ds) := \exp(\theta(x,s) )\nu(dx)ds$

Next, we prove inequality (\ref{mass}).

Note that $\theta (x,s) =0$, for $s \in[0,1]$ and $x \in \mathbb{R}$. Because of Theorem \ref{gir} and the density transformation formula (\ref{dens}) we have
\begin{align}\label{dentran}
 \PRO & (\tilde{X}_n (t) \leq 1, \text{ } 1 \leq t \leq T) = \ER_{\tilde{X}_n} ( \mathbf{1}_{\{\tilde{X}_n(t) \leq 1, \text{ } 1 \leq  t \leq T\}}) \\
&= \ER_{Y_n} \left( \mathbf{1}_{\{Y_n (t) \leq 1, \text{ } 1 \leq  t \leq T\}} e^{ - \int_{1}^T \int_{\mathbb{R}}  \theta (x,s)  \bar{N}_{Y_n} (dx,ds)} \right) \cdot e^{- \int_{1}^T \int_{\mathbb{R} }  g \left(\tfrac{f_n'(s) x}{m} \mathbf{1}_{x\in A } \right) \nu (dx)ds },
\end{align}
where $g(u) := (1+u ) \ln (1+u) - u$, $u\geq 0$. Since $g(u)\geq0$, for $u\geq0$, we obtain that
\begin{align*}
 e^{- \int_{1}^T \int_{\mathbb{R}}  g (\tfrac{f_n'(s) x}{m} \mathbf{1}_{x\in A } ) \nu (dx)ds } \leq 1.
\end{align*}

Let $p>1$ and $1/p +1/q =1$. Applying H\"older's inequality in (\ref{dentran}) yields that
\begin{align}\label{one}
&\PRO  (\tilde{X}_n (t) \leq 1, \text{ } 1 \leq  t \leq T) \notag \\
&\leq \left(\PRO(Y_n (t) \leq 1, \text{ } 1 \leq  t \leq T) \right)^{1/p}   \cdot \left(  \ER_{Y_n} \left(\exp \left(- q \int_{1}^T \int_{\mathbb{R}}  \theta (x,s) \bar{N}_{Y_n} (dx,ds)  \right) \right) \right)^{1/q} .
\end{align}
Let us consider the second term in (\ref{one}). Due to the density transform formula (\ref{dens2}) we have
 \begin{align*}
 &  \ER_{Y_n} \left(e^{-q \int_1^T \int_{\mathbb{R} }  \theta (x,s) \bar{N}_{Y_n} (dx,ds)  }  \right) \\
&=   \ER_{\tilde{X}_n} \left(e^{ \int_1^T \int_{\mathbb{R} } (-q+1)  \theta (x,s) ( N (dx,ds) - \nu(dx) ds) }  \right)\\
 & \quad \cdot   \exp{ \left( \int_1^T \int_{\mathbb{R} }  (-q\theta(x,s)+ q\theta (x,s) e^{\theta(x,s)}-e^{\theta (x,s)} + 1+ \theta(x,s))  \nu(dx) ds \right) }   \\
&=   \exp{ \left(  \int_1^T \int_{\mathbb{R} }  (e^{(-q +1)\theta (x,s)} -1 - (-q +1) \theta(x,s))  \nu(dx) ds \right)}   \\
 & \quad \cdot   \exp{ \left(  \int_1^T \int_{\mathbb{R}}  (-q\theta(x,s)+ q\theta(x,s) e^{\theta(x,s)}-e^{\theta (x,s)} + 1+ \theta(x,s))  \nu(dx) ds \right) }   \\
&=\exp{\left(  \int_1^T \int_{\mathbb{R}} e^{\theta(x,s)} (e^{-q\theta (x,s)} -1 + q \theta(x,s))  \nu(dx) ds \right) } ,
\end{align*} 
where we used as in the proof of Theorem \ref{falling} a modification of Lemma 33.6 of \cite{sato} in the second step. Again, the difference between \cite{sato} and our case consists in the consideration of time-inhomogeneous processes in contrast to time-homogeneous processes used in \cite{sato}. 

Taylor's expansion implies $ e^{-q \theta (x,s)}+ q \theta (x,s) - 1 \leq \tfrac{1}{2} q^2 \theta (x,s)^2$,  for all $x \in \mathbb{R}$ and $s \in [1,T]$.  Since $\theta$ is bounded away from infinity we have $\exp(\theta(x,s))<\tilde{c}_1$, for some $\tilde{c}_1>0$, and thus,
\begin{align}
\frac{1}{q} & \int_{1}^T \int_{\mathbb{R}} e^{\theta(x,s)} ( e^{-q \theta (x,s)}+q \theta (x,s) - 1 )  \nu (dx)ds \leq q \int_{1}^T \int_{\mathbb{R}} \tfrac{\tilde{c}_1}{2} \theta (x,s)^2  \nu (dx)ds \notag \\
& \leq \frac{q \cdot \tilde{c}_1}{2 m^2} \int_{1}^T f_n' (s)^2 ds \cdot \int_A x^2 \nu(dx) \leq \frac{q \cdot \tilde{c}_1}{2 m}  ||f'||^2_{L_2 [1,\infty)},\notag
\end{align}
having also used (\ref{incr}). 
Plugging this into (\ref{one}) yields
\begin{align*}
 g_n(T) &= \PRO  (\tilde{X}_n (t) \leq 1, \text{ } 1 \leq t \leq T) \\
& \leq \PRO  (Y_n (t) \leq 1, \text{ } 1 \leq  t \leq T)^{1/p} \cdot \exp \left( \frac{ q \cdot \tilde{c}_1}{2 m} ||f'||^2_{L_2 [1,\infty)} \right).
\end{align*}
Optimizing in $p$ shows that the best choice is 
\begin{align*}
 1/p := 1 - \sqrt{ \frac{\tilde{c}_1 \cdot ||f'||^2_{L_2 [1,\infty)}}{ 2 m \ln (1/  \PRO_{Y_n}  (Y_n (t) \leq 1, \text{ } 1 \leq  t \leq T))} } <1,
\end{align*}
which shows inequality (\ref{mass}) with  $c_1>0$ suitably chosen.

\paragraph*{Step 2: Proof of (\ref{eqn: uperboundmass})}

Having deduced (\ref{mass}) we carry on with the examination of the one-sided exit problem for the process $Y_n$. More  precisely, we will prove the following upper bound, for any $n \in \mathbb{N}$,
\begin{align}\label{eqn: uperboundmass}
 \PRO  (Y_n (t) \leq 1   ,  \text{  } 1 \leq t \leq T) &\leq 2 \cdot g_{n+1} (T).
\end{align}

First, we represent the process $Y_n$ as a sum of independent processes $Y_n (\cdot) \overset{d}{=} X (\cdot) + Z_n (\cdot) - f_n (1)$, where  $Z_n$ is an additive process with triplet $(0,0,\frac{ f_n'(s) x}{m} \mathbf{1}_{\{ x \in A\}} \nu(dx) ds)$. 
Due to the L\'evy-Khintchine formula and
\begin{align*}
 f_n(t) - f_n (1) = \int_1^t f_n' (s) ds= \int_0^t f_n' (s) ds
\end{align*}
 there exists a L\'evy process $\tilde{Z}$ with triplet $(0,0, \tfrac{x}{m} \mathbf{1}_{\{ x \in A\}} \nu(dx))$ such that \\
$Z_n (\cdot) = \tilde{Z} (f_n(\cdot)-f_n (1))$ in f.d.d.  Note that $\tilde{Z}$ is a L\'evy martingale with some finite exponential moment,  since $A$ is compact in $(0,\infty)$, and the characteristic exponent of $\tilde{Z}$ has the following representation 
\begin{align*}
 \Psi (u)
= \int_{\mathbb{R}} (1-e^{iux} +  iux)  \tfrac{x}{m} \mathbf{1}_{\{ x \in A\}}  \nu (dx)
\end{align*}
and the L\'evy measure satisfies $\int ( |x| \wedge x^2 )  \tfrac{x}{m} \mathbf{1}_{\{ x \in A\}}  \nu(dx)< \infty$. Thus,
\begin{align*}
 \PRO &(Y_n (t) \leq 1, \text{ } 1 \leq t \leq T) =  \PRO \Bigl(X(t) + \tilde{Z}(f_n(t)-f_n(1)) \leq 1 + f_n (1), \text{ } 1 \leq t \leq T \Bigr). 
\end{align*}

 Denote $c_2 := \ER \left( \tilde{Z} (1)^2 \right) <\infty$. Let $B$ be a Brownian motion. Using Lemma \ref{coup2} we can write with a suitable constant $\kappa_{\delta}>0$
\begin{align}\label{auft}
\PRO & \Bigl(X(t) + \tilde{Z} (f_n (t)-f_n (1)) \leq 1 +f_n (1), \text{ } 1 \leq t \leq T \Bigr) \notag \\
&\leq  \PRO \Bigl(X(t) \leq 1 + f_n (1) +  \kappa_{\delta} \ln T - c_2 B (f_n (t)-f_n (1)) , \text{ } 1 \leq t \leq T \Bigr) + T^{-1-\delta}  .
\end{align}

 In order to apply results of one-sided boundary problems for Brownian motion define the sets
\begin{align*}
 E_n &:= \left\{  c_2 B(f_n (t) -f_n (1)) \geq - \max \{ (\ln T)^5,  (f_n (t)-f_n (1))^{3/4} \}, \text{ } 1 \leq t \leq T   \right\} \\
&\supseteq  \left\{  c_2 B(t) \geq - \max \{ (\ln T)^5,  t^{3/4} \}, \text{ } 0 \leq t \leq  \kappa T   \right\} =: \tilde{E}_n,
\end{align*}
 since $f(T) \leq \kappa T$, for $\kappa >0$ suitably chosen (see (\ref{prop1})). Then due to Lemma \ref{BBgr} and $f_n (1) = f_n (\ln T)$ we obtain that
\begin{align}\label{ab}
  \PRO& \Bigl(X(t) \leq 1 + f_n (1)+ \kappa_{\delta}  \ln T - c_2 B (f_n (t)-f_n (1)) , \text{ } 1 \leq t \leq T \Bigr) \notag \\
&\leq  \PRO \Bigl(X(t) \leq 1 +f_n (1) + \kappa_{\delta}  \ln T  - c_2 B (f_n (t)-f_n (1)) , \text{ } 1 \leq t \leq T; E_n \Bigr) + \pr{}{\tilde{E}_n^c}\notag \\
&\leq  \PRO \Bigl(X(t) \leq 1+f_n (1) + \kappa_{\delta} \ln T +   \max \{ (\ln T)^5,  (f_n (t)-f_n (1))^{3/4} \}, \text{ } 1 \leq t \leq T\Bigr) \notag \\
&\quad  + \exp{ \left(   - \kappa (\ln T)^2 /4\right)}\notag \\
&=  g_{n+1} (T) + \exp{ \left(   - \kappa (\ln T)^2 /4\right)} .
\end{align}

\paragraph*{Step 3: Proof of  (\ref{goalset})}
Our next goal is to show the iteration rule that means, for every $n \in \mathbb{N}$,
 \begin{align}\label{goalset}
  g_n(T) \leq H  \bigl( 2 g_{n+1} (T) \bigr)
 \end{align}
Putting (\ref{ab}) and (\ref{auft}) into (\ref{mass}) and using that $H'>0$ on $(0,1]$ we get
\begin{align}
 g_n (T) & \leq  \left[  g_{n+1} (T) +  T^{-1-\delta} + \exp{ \left(   - \kappa  (\ln T)^2 /4\right)} \right]  \notag \\
 & \quad \cdot \exp \left( \sqrt{c_1 ||f'||^2_{L_2 [1,\infty)} \ln (1/  \left[ g_{n+1} (T) +  T^{-1-\delta} + \exp{ \left(   - \kappa  (\ln T)^2 /4\right)} \right]  )}  \right)\notag \\
& \leq  H(2 g_{ n+1} (T)) , \notag
\end{align}
 where  we used in the last step that $g_{n+1} (T) \geq T^{-1-\delta } +\exp{ \left(   - \kappa  (\ln T)^2 /4\right)}$,  for sufficiently large $T>1$, since
\begin{align}\label{gnT}
  g_{n+1} (T) \geq \PRO (X(t) \leq 1, \text{ } 0\leq t \leq T) = T^{-\delta +o(1)} \geq  T^{-1-\delta } + \exp{ \left(   - \kappa  (\ln T)^2 /4\right)}.
\end{align}
Hence, we have proved (\ref{goalset}).

\paragraph*{Step 4: Proof of (\ref{endpoint1})}
The aim of this step is to find a number $n(T)$ depending on $T$ such that
\begin{align}\label{endpoint1}
 g_{n(T)} (T) \leq T^{-\delta +o(1)}.
\end{align}
which provides the end point of the iteration.
For this purpose, our first goal is to set the number of iteration steps, depending on $T>1$, such that eventually the boundary is smaller than  $1+ (\ln T)^6$. Due to inequality (\ref{prop1}) there exists $\kappa>0$ such that $f(T) \leq \kappa T$. For this purpose, we choose, for $T$ sufficiently large, 
\begin{align*}
 n(T) :=\left\lceil \frac{\ln (\ln (\kappa T)/\ln(2))}{\ln(4/3)}\right\rceil
\end{align*}
 and thus, for $T$ sufficiently large, 
\begin{align*}
 g_{n(T)} (T) 
&\leq \PRO \left( X(t) \leq 1 + f (\ln T) + n(T) \cdot (\ln T)^5   , \text{ } 1 \leq t \leq T\right) \\
&\leq  \PRO \left( X(t) \leq 1 + (\ln T)^6, \text{  } 1 \leq t \leq T \right),
\end{align*}
where we used inequality (\ref{abs})  combined with $ f(t)^{3/4^{n(T)}}<2$, for $0 \leq t \leq T$, and that  $f(T)>1$ if $f$ is not bounded away from infinity. On the other hand, if  $\sup_{t\geq 0} |f(t)|  <\infty$, then applying Lemma \ref{const1c} already proves the theorem.

Applying Lemma \ref{helpln} implies
\begin{align*}
g_{n(T)} (T) &\leq \PRO  (X (t) \leq 1 + (\ln T)^6, \text{  } 1 \leq t \leq T)  \leq  \frac{ 2 \cdot \PRO \left( X(t) \leq 1, \text{  } 0 \leq t \leq T  \right)}{ \PRO \left( X(t) \leq 1 - t^{1/3}, \text{  } 0 \leq t \leq  ( \ln T)^{21}  \right)}\\
 & =\PRO \left( X(t) \leq 1, \text{  } 0 \leq t \leq T  \right) (\ln T)^{21 \delta + o(1)}, 
\end{align*} 
where we used Theorem \ref{falling} in the last step  and with it the assumption  $\nu (\mathbb{R}_-) >0$. Using now the main assumption (\ref{assum})  yields (\ref{endpoint1}).

\paragraph*{Step 5: Proof of (\ref{goalpos})}
In this step we combine (\ref{goalset}) with (\ref{endpoint1}) to obtain finally inequality (\ref{goalpos}).
For this purpose, we calculate $H^{n(T)}_{2} (2 g_{n(T)} (T))$. First, we show by induction for $x$ sufficiently small that, for any $n \geq 1$,
\begin{align}\label{induct}
 H^n_{2}& (2x) \leq   2^{n} \cdot x \cdot  \exp \left( n  \sqrt{c_1 ||f'||_{L_2 [1,\infty)} \ln(1/x )}  \right) .
\end{align}
Clearly, we get, for $n=1$,
\begin{align*}
H^1_2 (2x) = H(2x) \leq 2 \cdot x \cdot \exp \left(  \sqrt{c_1 ||f'||_{L_2 [1,\infty)} \ln(1/x ) } \right),
\end{align*}
 since $\ln(1/(2 x ))  \leq \ln(1/x ) $. Now, we assume that (\ref{induct}) holds, for $n-1$. Since $H$ is non-decreasing in a  neighbourhood of zero, we have  
 \begin{align*}
 H^n_{2} (2x) &= H (2 H^{n-1} (2x)) \leq H \left( 2^n x \exp \left( (n-1) \sqrt{c_1 ||f'||_{L_2 [1,\infty)} \ln(1/x ) } \right) \right) \\
& \leq 2^n \cdot x \cdot \exp \left( n \sqrt{c_1 ||f'||_{L_2 [1,\infty)} \ln(1/x ) } \right),
\end{align*}
where we used in the last step that 
\begin{align*}
 \ln \left(    2^{-n} \cdot x^{-1}  \exp \left( - (n-1) \sqrt{c_1 ||f'||_{L_2 [1,\infty)} \ln(1/x ) } \right)    \right)  \leq \ln (1/x).
\end{align*}
Combining (\ref{induct}) and (\ref{gnT}) with equation (\ref{endpoint1}) and an $n(T)$-times iteration of (\ref{goalset}) yields
\begin{align*}
 \PRO & (X(t) \leq 1 + f(t), \text{  } 0 \leq t \leq T)  \leq g_0 (T)\\
&\leq H^{n(T)}_{2} \Bigl( 2 g_{n(T)} (T) \Bigr)   \leq  g_{n(T)} (T) \cdot 2^{n(T)} \exp \left( n(T) \sqrt{ c_1 ||f'||_{L_2 [1,\infty)}\ln(1/g_{n(T)} (T) )}  \right)   \\
&= T^{-\delta + o(1)}, 
\end{align*}
which completes the proof.
\medskip

\begin{rem}\label{rem:detailsnegjumps}
Let us come back to the discussion about the assumption of the negative jumps in Theorem \ref{mblp}.  The negative jumps are required (Step 4 in the proof) in order to show that (\ref{mblpass}) implies
\begin{align}\label{eqn:lnT52}
 \PRO (X(t) \leq 1 + (\ln T)^{5}, \text{  } 1 \leq t \leq T) \leq  T^{-\delta +o(1)}. 
\end{align}
Alternatively, this can be proved under different assumptions as mentioned in Remark \ref{rem:negjumps}.

On the one hand, with the help of \cite{SupLP}, we require -- instead of the negative jumps -- the assumption (a) in Remark \ref{rem:negjumps}. That means the renewal function $U$ of the ladder height process $H$ satisfies  $U((\ln T)^5) \leq T^{o(1)}$.

On the other hand, one can estimate (\ref{eqn:lnT52}) as follows: For every $T_0 \in (1,T^{o(1)})$, Lemma \ref{associated} and the stationary and independent increments yield 
\begin{align*}
\PRO & (X(t) \leq 1, \text{  } 1 \leq t \leq T) \\
& \geq \PRO (X(t) \leq 1, \text{  } 1 \leq t \leq T_0)\cdot \PRO  (X(T_0) \leq - (\ln T)^5, X(t) \leq 1, \text{  } T_0 \leq t \leq T)\\
& \geq \PRO (X(t) \leq 1, \text{  } 1 \leq t \leq T_0)\\
&\quad \quad \cdot \PRO  (X(T_0) \leq - (\ln T)^5, X(t) -X(T_0) \leq 1 + (\ln T)^5, \text{  } T_0 \leq t \leq T)\\
& \geq \PRO (X(t) \leq 1, \text{  } 1 \leq t \leq T_0)\cdot \PRO  (X(T_0) \leq - (\ln T)^5 ) \\
& \quad \quad \cdot \PRO ( X(t) \leq 1, \text{  } 0 \leq t \leq 1) \cdot  \PRO ( X(t) \leq 1 + (\ln T)^5, \text{  } 1 \leq t \leq T-T_0).
\end{align*}
Thus, using (\ref{mblpass}) leads to 
\begin{align*}
 \PRO (X(t) \leq 1 + (\ln T)^{5}, \text{  } 1 \leq t \leq T) \leq T^{-\delta +o(1)} \cdot  \PRO \left( X(T_0) \leq - (\ln T)^{5}\right)^{-1}.
\end{align*}
 Hence, -- instead of the negative jumps -- it is sufficient for (\ref{eqn:lnT52}) to require the assumption (b) in Remark \ref{rem:negjumps}. That means that there is a $T_0 \in (1,T^{o(1)})$ depending on $T$  such that 
 \begin{align*}
 \PRO (X(T_0) \leq - (\ln T)^{5}) \geq T^{o(1)}.
 \end{align*}
 
Particularly, both assumptions are satisfied by spectrally positive L\'evy processes -- these processes have no negative jumps -- belonging to the domain of attraction of a strictly stable L\'evy process with index $\alpha \in (1,2)$ and skewness parameter $\beta= +1$ (for this case see also \cite{DonRiv}, Theorem 3).

\end{rem}


\noindent {\bf Acknowledgement:} Frank Aurzada and Tanja Kramm were supported by the DFG Emmy Noether programme

\bibliographystyle{abbrv}

\end{document}